\let\le\leqslant
\let\ge\geqslant
\def\qe#1{$#1$-quasi-elementary}
\def\dsum_#1_#2{\sum_{{#1}\atop {#2}}}
\def\rar{\rightarrow}
\def\ck{\cK}
\begin{document}

\title[Brauer relations in positive characteristic]
{Relations between permutation representations in positive characteristic}
\author{Alex Bartel$^1$}
\address{$^1$School of Mathematics and Statistics, University of Glasgow, University Place,
Glasgow G12 8SQ, United Kingdom}
\author{Matthew Spencer$^2$}
\address{$^2$Mathematics Institute, Zeeman Building, University of Warwick,
Coventry CV4 7AL, UK}
\email{alex.bartel@glasgow.ac.uk, matthew.james.spencer91@gmail.com}
\llap{.\hskip 10cm} \vskip -0.8cm
\subjclass[2010]{19A22, 20C20, 20B05, 20B10}
\maketitle

\begin{abstract}
Given a finite group $G$ and a field $F$, a $G$-set $X$ gives rise to an
$F[G]$-permutation module $F[X]$. This defines a map from the Burnside ring
of $G$ to its representation ring over $F$. It is an old problem in
representation theory, with wide-ranging applications in algebra, number theory,
and geometry, to give explicit generators of the kernel $\K_F(G)$ of this map,
i.e. to classify pairs of $G$-sets $X$, $Y$ such that $F[X]\cong F[Y]$. When
$F$ has characteristic $0$, a complete description of $\K_F(G)$ is now known. In
this paper, we give a similar description of $\K_F(G)$ when $F$ is a field of
characteristic $p>0$ in all but the most complicated case, which is when $G$
has a subquotient that is a non-$p$-hypo-elementary $(p,p)$-Dress group.
\end{abstract}



\section{Introduction}\label{s:intro}
In the present paper, we study which finite $G$-sets $X$, $Y$, for a finite group
$G$, give rise to isomorphic linear permutation representations over a field of
positive characteristic. To explain the precise problem and the main result,
we need to recall some terminology.

Let $F$ be a commutative ring, and $G$ a finite group. The \emph{Burnside ring}
$\B(G)$ of $G$ has one generator $[X]$ for every finite $G$-set $X$,
and relations $[X]+[Y]=[Z]$ for all isomorphisms $X\sqcup Y \cong Z$ of $G$-sets,
with multiplication being defined by $[X]\cdot [Y]=[X\times Y]$. Since every
finite $G$-set is a finite disjoint union of transitive $G$-sets, and
every transitive $G$-set is isomorphic to a set of the form $G/H$, where $H$
is a subgroup of $G$, with $G/H$ isomorphic to $G/H'$ if and only if $H$ is
$G$-conjugate to $H'$, we deduce that as a group $\B(G)$ is free abelian on
the set of conjugacy classes of subgroups of $G$. We will therefore
write elements $\Theta$ of $\B(G)$ as linear combinations of subgroups of $G$,
which are always understood to be taken up to conjugacy. We will also sometimes
refer to these (representatives of) conjugacy classes of subgroups as \emph{the terms
of $\Theta$}, so that if $\Theta\in \B(G)$ and $H$ is a subgroup of $G$, we may talk
about \emph{the coefficient of $H$ in $\Theta$}. The \emph{representation ring} $\R_F(G)$
of $G$ over $F$ has a generator $[M]$ for every finitely generated
$F[G]$-module $M$, and relations $[M]+[N]=[O]$ for all isomorphisms $M\oplus N\cong O$
of $F[G]$-modules,
with multiplication being defined by $[M]\cdot [N]=[M\otimes_F N]$, where $G$ acts
diagonally on the tensor product. This is not to be confused with the ring of
Brauer characters of $F[G]$-modules, which is also often denoted by $\R_F(G)$, but which
will not feature in our paper.

There is a natural map $\B(G)\rar \R_F(G)$, sending the class of a $G$-set $X$
to the class of the associated permutation representation $F[X]$.
Let $\K_F(G)$ denote the kernel of this map. Its elements will be referred to as
\emph{Brauer relations over $F$}, or, once the choice of $F$ is understood, just as \emph{relations}. It is
easy to see that if $F$ is a field, then
the structure of $\K_F(G)$ only depends on $G$ and on the characteristic of $F$.
A good understanding of Brauer relations over fields of different characteristics
has many applications in number theory and geometry.
Brauer and Kuroda were, independently, the first to systematically investigate
this phenomenon when they used the non-triviality of $\K_{\Q}(G)$ to derive
interesting relations between class groups of number fields \cite{Bra,Kur}.
Since then, Brauer relations have been found to give rise to many interesting
relations between different invariants of number fields
\cite{Wal,Smi-01,BB-04,Bar-11}, of elliptic and modular curves
\cite{squarity,tamroot,SmiEdix}, and of Riemannian manifolds \cite{Sunada,GSdS,BP}.
In these applications, $F$ is usually taken to be a field, and one obtains
interesting information already by analysing $\K_{\Q}(G)$, but the sharpest results are
typically achieved if one knows precisely for what primes $p$ a given element of
$\B(G)$ is a relation over a field of characteristic $p$.

When $F$ is a field of characteristic $0$, a set of explicit generators of $\K_F(G)$
for all $G$ has been determined by the first author and T. Dokchitser
in \cite{bra1}, following important advances
due to Brauer himself \cite{Bra}, Langlands \cite{Lan-70}, Deligne \cite{Del-73},
Snaith \cite{Sna-88}, Tornehave \cite{Tor-84}, and Bouc \cite{Bouc}.
In contrast, almost nothing seems to be known about explicit generators of
$\K_F(G)$ when $F$ is a field of positive characteristic. The main result of
the present paper, Theorem \ref{thm:main} below, addresses that situation
by making substantial progress towards a complete classification.

The standard approach to problems of this kind is to view an element of
$\K_F(G)$ as ``uninteresting'' if it comes from a proper subquotient of $G$ (see
\S\ref{sec:first}). Call such a relation \emph{imprimitive}, and let $\Prim_F(G)$
denote the quotient of $\K_F(G)$ by the subgroup generated by the imprimitive
relations. If one can find a set of generators for $\Prim_F(G)$ for each finite
group $G$, then one can give a complete description of $\K_F(G)$: for every
finite group $G$, every element of $\K_F(G)$ is a linear combination of elements
of the form $\Ind\Inf\Theta$ as $\Theta$ runs over generators of $\Prim_F(U)$
for all subquotients $U$ of $G$. Such a description turns out to be
ideally suited for the applications in number theory and geometry mentioned above. 

If $p$ and $q$ are prime numbers, then a finite group is called \emph{\qe{q}} if
it has a normal cyclic subgroup of $q$-power index, equivalently if it is a split
extension of a $q$-group by a cyclic group of order coprime to $q$;
a finite group is called \emph{$p$-hypo-elementary} if it has a normal
$p$-subgroup with cyclic quotient, equivalently if it is a split
extension of a cyclic group of order coprime to $p$ by a $p$-group;
a group is called a \emph{$(p,q)$-Dress group} if it has a normal $p$-subgroup
with \qe{q} quotient.
A finite group
is called quasi-elementary if it is \qe{q} for some prime number $q$.
The main result of the present paper is the following.

\begin{theorem}\label{thm:main}
Let $F$ be either a field of characteristic $p>0$, or a discrete valuation ring
with finite residue field of characteristic $p$. Let $G$ be a finite group, and
suppose that $\Prim_F(G)$ is non-trivial. Then:
\begin{enumerate}[leftmargin=*, label={\upshape(\Alph*)},itemindent=.5cm,labelwidth=\itemindent,align=left,labelsep=0.2cm]
\item\label{item:structG} the group $G$ is not $p$-hypo-elementary,
and in addition $G$ satisfies one of the following conditions:
\begin{enumerate}[leftmargin=*, label={\upshape(\roman*)},itemindent=.5cm,labelwidth=\itemindent,align=left,labelsep=0.2cm]
\item\label{item:main_qquasielem} the group $G=C\rtimes Q$ is quasi-elementary
of order coprime to $p$, where $C$ is cyclic and $Q$ is a $q$-group for some prime number $q$, and
either $C$ is not of prime order, or $Q$ does not act faithfully on $C$;
\item\label{item:main_Serre} there are a normal elementary abelian $l$-subgroup
$W\cong (C_l)^d$ of $G$, where $l$ is a prime number and $d\ge 1$ is an integer,
and a $(p,q)$-Dress subgroup $D$ of $G$, where $q$ is a prime number, such that
$G=W\rtimes D$, with $D$ acting faithfully on $W$; moreover, either $D$ acts
irreducibly on $W$, or $G=(C_l\rtimes D_1)\times (C_l\rtimes D_2)$, where
$D_1$, $D_2$ are cyclic $q$-groups;
\item\label{item:main_nonabelianSerre} there is an exact sequence
$$
1 \to S^d \to G \to D\to 1,
$$
where $S$ is a non-abelian simple group, $d\ge 1$ is an integer,
and $D$ is a $(p,q)$-Dress group for a prime number $q$, such that
the natural map $D\to \Out(S^d)$ is injective, and $S^d$ is
a minimal non-trivial normal subgroup;
\item the group $G$ is a $(p,p)$-Dress group.
\end{enumerate}
Moreover,
\item\label{item:generatorsPrim}
in the cases \ref{item:main_qquasielem} -- \ref{item:main_nonabelianSerre}, the
structure of $\Prim_F(G)$ and a set of generators are as follows.
\begin{enumerate}[leftmargin=*, label={\upshape(\roman*)},itemindent=.5cm,labelwidth=\itemindent,align=left,labelsep=0.2cm]
\item One has $\Prim_F(G)=\Prim_{\Q}(G)$, and the latter is described by
\cite[Theorem A, case (4)]{bra1}.
\item If $D$ is $p$-hypo-elementary, then $\Prim_F(G)\cong \Z$, and otherwise
$\Prim_F(G)\cong\Z/q\Z$.
In both cases, $\Prim_F(G)$ is generated by $\Theta$ defined as follows:
\begin{enumerate}[leftmargin=*,label={\upshape(\alph*)},itemindent=.5cm,labelwidth=\itemindent,align=left,labelsep=0.2cm]
\item\label{item:mn} if $d=1$ and $D=C_{mn}=C_m\times C_n$ is cyclic of order $mn$, where $m$, $n>1$
are coprime integers,
then $\Theta=G-C_{mn}+\alpha(C_n-C_l\rtimes C_n)+\beta(C_m-C_l\rtimes C_m)$,
where $\alpha$, $\beta$ are any integers satisfying $\alpha m+\beta n = 1$;
\item\label{item:q^k} if $d=1$ and $D=C_{q^{k+1}}$ is cyclic of order $q^{k+1}$,
where $k\in \Z_{\ge 0}$, then $\Theta=C_{q^k}-qC_{q^{k+1}}-C_l\rtimes C_{q^k}+qG$;
\item\label{item:d>1} if $d\ge 2$, then $\Theta = G-D+\sum_{U\le_G W\atop (W:U)=l} (U\N_D(U)-W\N_D(U))$,
where the sum runs over a full set of $G$-conjugacy class representatives of index
$l$ subgroups of $W$, and where $\N_D(U)$ denotes the normaliser of $U$ in $D$.
\end{enumerate}
\item If $D$ is $p$-hypo-elementary, then $\Prim_F(G)\cong \Z$, and otherwise
$\Prim_F(G)\cong\Z/q\Z$.
In both cases, $\Prim_F(G)$ is generated by any relation of the form $G+\sum_{H\lneq G}a_H H$,
where $a_H$ are integers.
\end{enumerate}
\end{enumerate}
\end{theorem}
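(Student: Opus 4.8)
The plan is to combine three ingredients: the ``first reduction'' of \S\ref{sec:first}, which factors out imprimitive relations and reduces to a ``faithful'' situation; Conlon's induction theorem, the characteristic-$p$ analogue of the theorems of Artin and Brauer, which says that $\R_F(G)$ is generated by inductions from $p$-hypo-elementary subgroups and hence that the existence of a non-imprimitive relation on $G$ is severely constrained by the behaviour on $p$-hypo-elementary subgroups and on the subquotients through which relations get inflated; and the classification of minimal normal subgroups. Throughout one inducts on $|G|$.

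To prove part \ref{item:structG}, I would first record that every $p$-hypo-elementary group has trivial $\Prim_F$: for such a group the normal $p$-subgroup acts trivially on every simple $F[G]$-module and the cyclic $p'$-quotient contributes only Galois-conjugacy classes of characters, so every Brauer relation is inflated from that quotient and is therefore imprimitive. This yields the first assertion of \ref{item:structG} and, via Conlon, strong restrictions ``one level up''. Next, take a minimal normal subgroup $N$ of $G$; after the first reduction one may assume that $C_G(N)$ is normal of controlled shape and that $G/N$ and $G/C_G(N)$ are again relation-supporting, so the inductive hypothesis applies to them. The three possibilities for $N$ --- an elementary abelian $l$-group with $l\ne p$, a direct power of a non-abelian simple group, or a $p$-group --- lead respectively to cases \ref{item:main_qquasielem}/\ref{item:main_Serre}, to case \ref{item:main_nonabelianSerre}, and to case~(iv). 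The delicate steps are: showing that the complement (resp.\ quotient) $D$ must be a $(p,q)$-Dress group --- for $G/N$ to still support relations one needs, through Conlon and Dress's analysis of permutation equivalences in characteristic $p$, that the acting group be $(p,q)$-Dress rather than merely quasi-elementary, and this is exactly where the characteristic-$p$ story departs from the rational one --- and establishing the rigid dichotomy in case \ref{item:main_Serre}, which I would prove by restricting a primitive relation to the point stabilisers $U\N_D(U)$ and checking that only the two listed module-theoretic configurations (irreducible action on $W$, or two ``lines'' with cyclic $q$-groups) survive.

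For part \ref{item:generatorsPrim} I would treat the three cases separately. In case \ref{item:main_qquasielem} one has $p\nmid|G|$, so reduction modulo $p$ induces an isomorphism from the representation ring of $G$ over a suitable complete discrete valuation ring of mixed characteristic onto $\R_F(G)$; a short Galois-descent argument, using that for such quasi-elementary groups the relevant characters are essentially rational, then identifies $\K_F(G)$ with $\K_\Q(G)$, so that $\Prim_F(G)=\Prim_\Q(G)$, which is \cite[Theorem~A, case~(4)]{bra1}. In cases \ref{item:main_Serre} and \ref{item:main_nonabelianSerre} the argument has three steps. First, exhibit the claimed $\Theta$ and verify directly that $F[\Theta]=0$: for \ref{item:main_Serre} this is the check of the three displayed identities \ref{item:mn}, \ref{item:q^k}, \ref{item:d>1}, done by evaluating the permutation-module class on each simple $F[G]$-module --- those inflated from $D$, and those on which $W$ acts nontrivially, in turn --- and for \ref{item:main_nonabelianSerre} one builds a $\Theta$ of the shape $G+\sum_{H\lneq G}a_H H$ by the analogous template and checks the same vanishing. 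Second, show that $\Theta$ is primitive and compute its order: the lower bound ($\Theta\ne 0$ in $\Prim_F(G)$, of order exactly $q$ when $D$ is not $p$-hypo-elementary) I would get from a mark-type homomorphism on $\B(G)$ followed by a $p$-adic valuation, together with an explicit identity showing $q\Theta$ --- but not $\Theta$ --- to be inflated from a proper subquotient in the non-$p$-hypo-elementary case. Third, the upper bound --- every primitive relation is a multiple of $\Theta$ --- which follows by intersecting $\K_F(G)$ with the hyperplane ``coefficient of $G$ equals $0$'' and showing, via restriction to the subquotients isolated in part \ref{item:structG} and the inductive description of their relations, that this intersection consists entirely of imprimitive relations; this also shows that any relation $G+\sum_{H\lneq G}a_H H$ generates, as claimed in the last clause.

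The main obstacle is the characteristic-$p$ input, which bites in two places. In part \ref{item:structG} it is the simultaneous control of $N$, $C_G(N)$ and $G/C_G(N)$ needed to close the induction --- above all the step forcing the top quotient to be a genuine $(p,q)$-Dress group and not just quasi-elementary. In part \ref{item:generatorsPrim} it is pinning down the order of the generator: proving that $\Prim_F(G)$ is free of rank one exactly when $D$ is $p$-hypo-elementary and is $\Z/q\Z$ otherwise amounts to detecting one ``extra'' modulo-$p$ relation --- a projectivity phenomenon with no rational analogue --- and verifying that there are no others, and this detection is the technical heart of the whole argument.
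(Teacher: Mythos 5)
Your overall architecture (classify the groups, then exhibit explicit generators) matches the paper's, but there are two genuine gaps, each of which sinks a major half of the argument. First, your method for verifying that the displayed elements $\Theta$ lie in $\K_F(G)$ --- ``evaluating the permutation-module class on each simple $F[G]$-module'' --- does not work in characteristic $p$: the representation ring $\R_F(G)$ is defined by direct-sum relations, not by short exact sequences, so agreement on simple modules (equivalently, of Brauer characters) only proves equality in the Grothendieck group, not that $F[X]\cong F[Y]$. This is exactly the point at which the positive-characteristic problem departs from the rational one, and the paper's substitute is Conlon's induction theorem (Theorem \ref{thm:Conlon}): $\Theta\in\K_F(G)$ if and only if $f_K(\Theta)=0$ for every $p$-hypo-elementary subgroup $K\le G$. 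The resulting fixed-point computations (Lemmas \ref{lem:fixedpointcount}--\ref{lem:KinUNU} and Theorem \ref{thm:highdimrel}) are the technical heart of the paper and have no counterpart in your proposal.

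Second, your inductive scheme for part \ref{item:structG} is set up on a false premise: you assume that after passing to a minimal normal subgroup $N$, the quotients $G/N$ and $G/C_G(N)$ are ``again relation-supporting'' so that an inductive hypothesis applies. In fact the correct constraint runs the other way: if $\Prim_F(G)\ne 0$ then every proper quotient of $G$ is forced to be a $(p,q)$-Dress group for a single prime $q$ (and such quotients generally have \emph{trivial} $\Prim_F$ --- see Theorem \ref{thm:pqDress}, which shows a non-$p$-hypo-elementary $(p,q)$-Dress group with $q\ne p$ and non-trivial $p$-core has trivial $\Prim_F$, a fact your proposal also needs but never establishes, e.g.\ to force $p\nmid|G|$ in case \ref{item:main_qquasielem}). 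This quotient trichotomy, together with the statement that $\Prim_F(G)$ is then $\Z$ or $\Z/q\Z$ and is generated by \emph{any} relation in which $G$ has coefficient $1$, is Theorem \ref{thm:quotients}, imported from \cite{GFIs}; it is what makes your ``second step'' (computing the order of $\Theta$ via a mark homomorphism and a $p$-adic valuation) and ``third step'' (the upper bound) unnecessary, and neither of your sketches for those steps is detailed enough to replace it. Your treatment of case \ref{item:main_qquasielem} via lifting to mixed characteristic and Galois descent would likely work but is far more elaborate than needed: since $p\nmid|G|$, the $p$-hypo-elementary subgroups are exactly the cyclic ones, so Conlon's theorem and Artin's theorem have the same kernel and $\K_F(G)=\K_{\Q}(G)$ immediately.
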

Explicit formulae for relations as in Theorem
\ref{thm:main}\ref{item:generatorsPrim}\ref{item:main_nonabelianSerre} can be
derived from \cite{Boltje} or from \cite{BK}.

Let us briefly sketch the main ingredients of the proof and the structure of the paper.
In section \ref{sec:first}, we recall the basic formalism of Brauer relations
and results from the literature that we will need in the rest of the paper.
The most important one of these is Theorem \ref{thm:quotients}, which places
tight restrictions on the possible quotients of a finite group $G$ for which
$\Prim_F(G)$ is non-trivial. For example it states that if $G$ is a finite group
for which $\Prim_F(G)$ is non-trivial, then there exists a prime number $q$
such that all proper quotients of $G$ are $(p,q)$-Dress groups.
If moreover $G$ itself is not a $(p,q')$-Dress group for any prime number
$q'$, then $\Prim_F(G)$ is cyclic, and is generated by any relation
of the form $\Theta=G+\sum_{H\lneq G}a_HH$, where $a_H\in \Z$. This almost
immediately implies the conclusions of the theorem when $G$ is not soluble 
--  see part \ref{item:main_nonabelianSerre} of the conclusion.

In Section \ref{sec:soluble}, we turn our attention to soluble groups.
First, we prove in Theorem \ref{thm:pqDress} that if $q$ is a prime number
different from $p$, and $G$ is a $(p,q)$-Dress group with a non-trivial
normal $p$-subgroup, then $\Prim_F(G)$ is trivial. We then analyse the
consequences of Theorem \ref{thm:quotients} for soluble groups, which leads,
in Theorem \ref{thm:onlyif}, to a proof of part \ref{item:structG} of Theorem
\ref{thm:main}.

To prove part \ref{item:generatorsPrim} of the theorem, it then remains to
exhibit explicit relations of the form $\Theta = G+\sum_{H\lneq G}a_HH$
for groups $G$ appearing in the theorem that are not $(p,q)$-Dress for any prime number $q$,
and to separately deal with $(p,q)$-Dress groups that do not have a non-trivial
normal $p$-subgroup, i.e. that are $q$-quasi-elementary. The main difference between
the case we are treating in this paper and the case of $F$ having characteristic $0$,
which was treated in \cite{bra1}, is that we do not have character theory at
our disposal. Instead, to
prove that an element of $\B(G)$ is a relation, we use Conlon's induction
theorem, Theorem \ref{thm:Conlon}, so we are led to computing fixed points
of various $G$-sets under all $p$-hypo-elementary subgroups of $G$.
Section \ref{sec:relations} is devoted to these somewhat technical calculations, and Proposition
\ref{prop:1dimrel} and Theorem \ref{thm:highdimrel} furnish the final ingredients
for the proof of Theorem \ref{thm:main}. The whole proof is summarised
at the end of Section \ref{sec:relations}.

We remark that for a full classification of Brauer relations in positive
characteristic, one would also need to determine the structure and generators of
$\Prim_F(G)$ for groups $G$ that are $(p,p)$-Dress groups. That problem is left
open in this work.

\begin{acknowledgements}
The first author was partially supported by a Research Fellowship from
the Royal Commission for the Exhibition of 1851, and by an EPSRC First
Grant, and the second author was supported by an EPSRC Doctoral Grant
during this project. Both authors were at the University of Warwick. We
would like to thank all these institutions for their financial support,
respectively for a conducive research environment. We would like to
also thank Jeremy Rickard for a helpful conversation, and an anonymous referee
for helpful remarks and corrections.
\end{acknowledgements}

\subsection*{Notation}
Throughout the rest of the paper, we fix a prime number $p$, and $F$ will denote
either a field of characteristic $p$, or a local ring with finite residue field
of characteristic $p$; $G$ will always denote a finite group; $O_p(G)$ is the
$p$-core of $G$, defined as the intersection of all its $p$-Sylow subgroups;
for a prime $q$, $O^q(G)$ will denote the minimal normal subgroup of $G$ of
$q$-power index; $\Aut(G)$ denotes the automorphism group of $G$, and
$\Out(G)$ denotes the outer automorphism group of $G$, i.e. the
quotient of $\Aut(G)$ by the subgroup of inner automorphisms.

If $H$, $U$ are two subgroups of $G$, and $g$, $x\in G$, then
we will write ${}^gx=gxg^{-1}$ and ${}^gU=gUg^{-1}$; the normaliser of
$H$ in $G$ will be denoted by $\N_G(H)$; the commutator $[H,U]$ is the subgroup of
$G$ generated by $\{[h,u]=huh^{-1}u^{-1}: h\in H, u\in U\}$.
The commutator $[H,U]$ is trivial if and only if every element of $H$ commutes
with every element of $U$.

If $H$ is a subgroup of $G$, then a (left) transversal for $G/H$ is a set
$T\subseteq G$ such that $G$ is a disjoint union $G=\bigsqcup_{g\in T}gH$.

The Frattini subgroup $\Phi(G)$ of $G$ is defined as the intersection of all
maximal subgroups of $G$. If $l$ is a prime, and $W$ is an $l$-group,
then the Frattini subgroup of $W$ is equal to $[W,W]W^l$. It has the property
that a normal subgroup $N$ of $W$ contains the Frattini subgroup if and only
if $W/N$ is an elementary abelian $l$-group. It also has the property that every
element of $\Phi(W)$ is a ``non-generator'', meaning that every generating set of $W$
remains a generating set if all elements of $\Phi(W)$ are omitted.

If $R$ is any set of prime numbers, then an $R$-Hall
subgroup of $G$ is a subgroup whose order is a product of primes in $R$, and
whose index is not divisible by any prime in $R$. Hall's theorem says that if
$G$ is soluble, then for every set $R$ of prime numbers, an $R$-Hall subgroup
of $G$ exists, any two $R$-Hall subgroups are conjugate, and every subgroup of
$G$ whose order is a product of prime numbers in $R$ is contained in some
$R$-Hall subgroup \cite[Theorems 3.13, 3.14, and Problem 3C.1]{Isaacs}. If $q$
is a prime number, we will say ``$(-q)$-Hall subgroup'', when we mean an $R$-Hall
subgroup for $R$ being the set of all prime numbers except for $q$.

\section{Basic properties and induction theorems}\label{sec:first}
Let $G$ be a finite group, let $H$ be a
subgroup of $G$, let $N$ be a normal subgroup of $G$, and let $\pi$ denote the
quotient map $G\to G/N$. There are maps

\beq
\Ind_{G/H}: & \K_F(H) & \rar & \K_F(G),\\
&\displaystyle \sum_{U\le H}n_UU & \mapsto & \displaystyle \sum_{U}n_UU;\\
\\
\Res_{G/H}: & \K_F(G) & \rar & \K_F(H),\\
& \displaystyle \sum_{U\le G}n_UU  & \mapsto & {\displaystyle \sum_{U\le G}\sum_{g\in H\backslash G/U} n_U({}^gU\cap H)};\\
\\
\Inf_{G/N}: & \K_F(\bar{G}) & \rar & \K_F(G),\\
            &\displaystyle \sum_{\bar{U}\le G/N}n_{\bar{U}}\bar{U} & \mapsto & \displaystyle \sum_{\bar{U}\le G/N}n_{\bar{U}}\pi^{-1}(\bar{U}),
\eeq
induced by the natural induction, restriction, and inflation
maps, respectively, on the Burnside rings.

\begin{lemma}\label{lem:closedunderquo}
Let $G$ be a finite group, let $N$ be a normal subgroup of $G$, and let
$q$ be a prime number. Then:
\begin{enumerate}[leftmargin=*, label={\upshape(\alph*)}]
\item\label{item:quasielemquo} if $G$ is $q$-quasi-elementary, then so is $G/N$;
\item\label{item:dressquo} if $G$ is a $(p,q)$-Dress group, then so is $G/N$.
\end{enumerate}
\end{lemma}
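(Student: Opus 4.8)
The plan is to use nothing beyond the standard isomorphism theorems together with the definitions recalled in the introduction, and to deduce part \ref{item:dressquo} formally from part \ref{item:quasielemquo}.

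For part \ref{item:quasielemquo}, suppose $G$ is $q$-quasi-elementary, so that it has a normal cyclic subgroup $C$ with $[G:C]$ a power of $q$. I would consider $\bar C = CN/N \le G/N$. It is normal in $G/N$, being the image of the normal subgroup $C$; it is cyclic, being isomorphic to the quotient $C/(C\cap N)$ of the cyclic group $C$; and its index satisfies $[G/N:\bar C] = [G:CN]$, which divides $[G:C]$ (since $[G:C]=[G:CN]\cdot[CN:C]$) and is therefore a power of $q$. Hence $G/N$ has a normal cyclic subgroup of $q$-power index, i.e. it is $q$-quasi-elementary.

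For part \ref{item:dressquo}, suppose $G$ is a $(p,q)$-Dress group, so that it has a normal $p$-subgroup $P$ with $G/P$ being $q$-quasi-elementary. Put $\bar P = PN/N$; this is a normal subgroup of $G/N$ (image of a normal subgroup), and it is a $p$-group, being a quotient $P/(P\cap N)$ of the $p$-group $P$. By the third isomorphism theorem $(G/N)/\bar P \cong G/PN$, which is a quotient of $G/P$; applying part \ref{item:quasielemquo} to the $q$-quasi-elementary group $G/P$ and its normal subgroup $PN/P$ shows that $G/PN$ is $q$-quasi-elementary. Therefore $G/N$ has a normal $p$-subgroup with $q$-quasi-elementary quotient, as required.

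There is no genuine obstacle here; the only points needing a moment's care are the index computation showing that $[G:CN]$ is still a power of $q$, and the bookkeeping of iterated quotients in part \ref{item:dressquo}, both of which are immediate from Lagrange's theorem and the isomorphism theorems.
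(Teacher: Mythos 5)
Your proof is correct and follows essentially the same route as the paper's: part \ref{item:quasielemquo} via the image $CN/N$ of the normal cyclic subgroup, and part \ref{item:dressquo} by passing to the image of a normal $p$-subgroup and invoking part \ref{item:quasielemquo} (the paper phrases this with the $p$-core $O_p(G)$, but the argument is the same). The extra detail you supply on the index computation and the iterated quotients is fine but not a different method.
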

\begin{proof}
\ref{item:quasielemquo} Let $C$ be a normal cyclic subgroup of $G$ of $q$-power index.
Then $CN/N$ is a normal cyclic subgroup of $G/N$ of $q$-power index.

\ref{item:dressquo} The $p$-core of $G/N$ contains $O_p(G)N/N$, so $(G/N)/O_p(G/N)$ is a quotient
of $G/O_p(G)$. The assertion therefore follows from part (a).
\end{proof}

\begin{definition}\label{def:fixedpoints}
Given a $G$-set $X$ and a subgroup $U$ of $G$, define $f_U(X)$ to be
the number of fixed points in $X$ under the action of $U$. Extended linearly,
$f_U$ defines a ring homomorphism $\B(G)\to \Z$.
\end{definition}

Let $G$ be a finite group, let $\cC$ and $\cP$ denote full sets of
representatives of conjugacy classes of cyclic, respectively
$p$-hypo-elementary subgroups of $G$.

\begin{theorem}[Artin's Induction Theorem]\label{thm:Artin}
The ring homomorphism
$$
\prod_{U\in \cC}f_U\colon \B(G)\rar \prod_{U\in \cC}\Z
$$
has image of finite additive index, and its kernel is precisely equal
to $\K_\Q(G)$.
\end{theorem}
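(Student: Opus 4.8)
The plan is to prove Artin's Induction Theorem, which asserts two things about the fixed-point map $\prod_{U\in\cC}f_U\colon \B(G)\to\prod_{U\in\cC}\Z$: that its image has finite index, and that its kernel equals $\K_\Q(G)$.

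\textbf{The kernel computation.} The key observation is that the fixed-point homomorphisms $f_U$ detect characters of permutation representations over a field of characteristic $0$. Concretely, for a $G$-set $X$ and a subgroup $U\le G$, the number $f_U(X)$ of $U$-fixed points equals $\langle \mathbf{1}_U, \Res_U \Q[X]\rangle = \frac{1}{|U|}\sum_{u\in U}\chi_{\Q[X]}(u)$, so knowing $f_U(X)$ for all cyclic $U$ is equivalent, by M\"obius inversion over the subgroup lattice restricted to cyclic subgroups, to knowing $\chi_{\Q[X]}(g)$ for all $g\in G$ (since $\chi_{\Q[X]}(g)$ depends only on the cyclic group $\langle g\rangle$, and conversely $f_{\langle g\rangle}(X) = \frac{1}{|\langle g\rangle|}\sum_{d\mid |g|}\sum_{\text{ord}(h)=d}\chi(h)$). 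Hence for $\Theta = \sum n_U U \in \B(G)$, we have $f_V(\Theta)=0$ for all cyclic $V$ if and only if the virtual character $\sum n_U \chi_{\Q[G/U]}$ vanishes identically, i.e. if and only if $\sum n_U \Q[G/U] = 0$ in $\R_\Q(G)$, which is exactly the condition $\Theta\in\K_\Q(G)$. This direction does not require any deep input; it is essentially the statement that permutation representations are determined over $\Q$ by their fixed-point counts on cyclic subgroups, combined with linear independence of the irreducible characters.

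\textbf{The finite-index statement.} This is the genuine content of Artin's theorem. One shows that for every cyclic subgroup $C\le G$, some nonzero integer multiple of the indicator function $e_C\in\prod_{U\in\cC}\Z$ (the tuple that is $1$ at $C$ and $0$ elsewhere) lies in the image; since the $e_C$ generate the target group, this gives finite index. The standard argument is inductive on $|G|$, or rather proceeds by first establishing the classical identity $|G|\cdot[G/1] = \sum_{C\le G \text{ cyclic}} \mu_{\text{something}} \,[G/C]$-type relation. More precisely, Artin's original argument produces, for the regular representation, an expression $|G| = \sum_{C} a_C \,\mathrm{Ind}_C^G \mathbf{1}$ with $a_C\in\Z$ by M\"obius inversion on the poset of cyclic subgroups: setting $a_C = \sum_{C'\ge C}\mu(C,C')\,\varphi(|C'|)\cdots$ — the point being that $\sum_{g\in G}$ decomposes according to the cyclic subgroup generated, and one inverts. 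Translating into $\B(G)$: there is $\Theta_0\in\B(G)$, supported on cyclic subgroups, with $f_U(\Theta_0) = |G|$ for all $U$ (namely $f_U$ of the regular-representation relation), or more usefully one obtains that $|N_G(C):C|\cdot|C|\cdot e_C$ lies in the image for each cyclic $C$, by a downward induction on the cyclic subgroup lattice: assuming the result for all cyclic $C'\gneq C$, the element $[G/C]$ has $f_{C}([G/C]) = |N_G(C):C|\neq 0$ and $f_{C'}([G/C]) = 0$ unless $C'$ is subconjugate to $C$, so subtracting off known multiples of $e_{C'}$ for the finitely many $C'$ with $f_{C'}([G/C])\neq 0$ clears all but the $C$-coordinate, yielding a nonzero multiple of $e_C$ in the image.

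\textbf{Main obstacle.} The only step requiring care is the bookkeeping in the downward induction for the finite-index claim: one must correctly identify, for a cyclic subgroup $C$, exactly which cyclic subgroups $C'$ have $f_{C'}([G/C])\neq 0$ — these are the $C'$ (up to conjugacy) that are $G$-subconjugate to $C$, i.e. contained in some conjugate of $C$ — and then verify that the poset structure permits clearing them by induction from the top down, so that each $e_C$ (times a suitable nonzero integer) is produced. There is no conceptual difficulty here, but the inductive base (maximal cyclic subgroups, where $[G/C]$ already has fixed-point vector a nonzero multiple of $e_C$ plus contributions only from smaller $C'$) and the transfinite-free termination need to be stated cleanly. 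I would organize the write-up as: (1) the fixed-point/character translation giving $\ker = \K_\Q(G)$; (2) the M\"obius-style downward induction giving finite index; this mirrors the classical proof and can cite \cite{Isaacs} or a standard reference for the underlying representation-theoretic facts if a fully self-contained argument is not desired.
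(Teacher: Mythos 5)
The paper offers no proof of this statement at all --- it simply cites \cite[Theorem 5.6.1]{Benson} --- so any direct argument is necessarily a different route; your overall strategy (kernel via character theory, finite index via an induction over the poset of cyclic subgroups) is the standard one. However, both halves contain a concrete error. In the kernel computation, the identity $f_U(X)=\frac{1}{\#U}\sum_{u\in U}\chi_{\Q[X]}(u)$ is false: the right-hand side equals $\dim_\Q(\Q[X])^U$, which by Burnside's lemma counts the $U$-\emph{orbits} on $X$, not the $U$-fixed points (take $U=G=C_2$ acting on $X=G$ by translation: $f_U(X)=0$ but there is one orbit), and the subsequent M\"obius inversion formula is wrong for the same reason. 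The fix is simpler than what you wrote: for a single element $g$ one has $\chi_{\Q[X]}(g)=\#X^g=\#X^{\langle g\rangle}=f_{\langle g\rangle}(X)$ directly, because a point fixed by $g$ is fixed by all of $\langle g\rangle$; this gives the equivalence ``$f_V(\Theta)=0$ for all cyclic $V$ if and only if the virtual character of $\Theta$ vanishes'' with no inversion needed, and the kernel statement then follows because $\Q[G]$-modules are determined by their characters.

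In the finite-index argument, your induction runs in the wrong direction and its stated base case fails. The cyclic subgroups $C'$ with $f_{C'}(G/C)\neq 0$ are exactly those subconjugate to $C$, hence of order at most $\#C$; so for a \emph{maximal} cyclic $C$ the fixed-point vector of $[G/C]$ is not ``a nonzero multiple of $e_C$'' --- it has nonzero entries at all smaller subconjugate classes --- and these cannot be cleared by an inductive hypothesis assumed for $C'\gneq C$, which is what you wrote. The induction must go upward: the base case is the trivial subgroup, where $[G]$ has fixed-point vector $\#G\cdot e_{\{1\}}$, and the hypothesis is assumed for all $C'$ properly subconjugate to $C$. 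Equivalently and more cleanly: order the elements of $\cC$ by non-decreasing order of the subgroups; then the matrix $\left(f_{C_i}(G/C_j)\right)$ is triangular with the nonzero integers $(\N_G(C_j):C_j)$ on the diagonal, hence has nonzero determinant, which yields finite index at once. This triangularity argument is precisely the one the paper itself uses (for $p$-hypo-elementary subgroups) in the proof of Corollary \ref{cor:Conlon}, so once these two repairs are made your proof is correct.
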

\begin{proof}
See the proof of \cite[Theorem 5.6.1]{Benson}.
\end{proof}

\begin{theorem}[Conlon's Induction Theorem]\label{thm:Conlon}
The ring homomorphism
$$
\prod_{U\in \cP}f_U\colon\B(G)\rar \prod_{U\in \cP}\Z
$$
has image of finite additive index, and its kernel is precisely equal to $\K_F(G)$.
\end{theorem}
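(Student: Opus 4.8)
The statement is Conlon's classical induction theorem, and the plan is to deduce it from the structure theory of $p$-permutation (trivial source) modules. First I would reformulate it: writing an arbitrary $\Theta\in\B(G)$ as a difference $[X]-[Y]$ of classes of honest $G$-sets, the membership $\Theta\in\K_F(G)$ says exactly that $F[X]\cong F[Y]$, while $\Theta\in\bigcap_{U\in\cP}\ker f_U$ says that $|X^U|=|Y^U|$ for every $p$-hypo-elementary $U\le G$; so the kernel assertion is equivalent to the statement that, for finite $G$-sets $X$ and $Y$, one has $F[X]\cong F[Y]$ if and only if $|X^U|=|Y^U|$ for all $p$-hypo-elementary $U$. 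Since every cyclic group is $p$-hypo-elementary we have $\cC\subseteq\cP$, so this refines Theorem \ref{thm:Artin}; in particular the proof will re-prove $\K_F(G)\subseteq\K_\Q(G)$. When $F$ is a discrete valuation ring I would first pass to its residue field $k$: reduction modulo the maximal ideal gives $\K_F(G)\subseteq\K_k(G)$, while conversely $p$-permutation $k[G]$-modules lift uniquely to $p$-permutation $F[G]$-lattices, so $k[X]\cong k[Y]$ forces $F[X]\cong F[Y]$; hence $\K_F(G)=\K_k(G)$ and one may assume $F$ is a field of characteristic $p$.

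The finiteness of the cokernel uses nothing beyond the table of marks, and in particular not Conlon's theorem. The full mark homomorphism $\prod_{(H)}f_H\colon\B(G)\to\prod_{(H)}\Z$, the product being over all conjugacy classes of subgroups, is injective with finite cokernel: ordering the classes by non-decreasing order of $H$, the matrix $\bigl(|(G/K)^H|\bigr)_{(H),(K)}$ is triangular (an entry vanishes unless $H$ is subconjugate to $K$) with non-zero diagonal entries $|\N_G(H):H|$. The homomorphism $\prod_{U\in\cP}f_U$ is this one composed with the coordinate projection onto the factors indexed by $p$-hypo-elementary subgroups, and the image under a coordinate projection of a finite-index subgroup of $\prod_{(H)}\Z$ again has finite index; this settles the image statement.

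For the inclusion $\K_F(G)\subseteq\bigcap_{U\in\cP}\ker f_U$, fix a $p$-hypo-elementary $U$, so that $P=O_p(U)$ is the normal $p$-Sylow subgroup of $U$ and $U/P$ is cyclic of order prime to $p$, and suppose $F[X]\cong F[Y]$; then $F[X]|_U\cong F[Y]|_U$. The Brauer construction $M\mapsto M(P)$ is a functor on $F[U]$-modules and sends a permutation module $F[Z]$ to $F[Z^P]$ as an $F[U/P]$-module (the basis vectors indexed by points of $Z^P$ survive, those indexed by larger $P$-orbits, of size a positive power of $p$, become trace images and die). Hence $F[X^P]\cong F[Y^P]$ over $F[U/P]$; and since $U/P$ has order prime to $p$, the permutation module of a $(U/P)$-set determines the set, because its Brauer character records the fixed-point numbers on the (necessarily cyclic) subgroups and the table of marks of a cyclic group is invertible. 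So $X^P\cong Y^P$ as $(U/P)$-sets, whence $|X^U|=|(X^P)^{U/P}|=|(Y^P)^{U/P}|=|Y^U|$.

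The reverse inclusion $\bigcap_{U\in\cP}\ker f_U\subseteq\K_F(G)$ is where the content lies. Assuming $|X^U|=|Y^U|$ for all $p$-hypo-elementary $U$, I want $F[X]\cong F[Y]$. Both are $p$-permutation $F[G]$-modules, so by the detection theorem for such modules (essentially Broué's Brauer-morphism theorem together with Green correspondence; see also \cite{Benson}) it suffices to show that for every $p$-subgroup $Q\le G$ the Brauer quotients $F[X](Q)\cong F[X^Q]$ and $F[Y](Q)\cong F[Y^Q]$ have the same Brauer character as $F[\N_G(Q)/Q]$-modules — one may replace the modules by their Brauer characters because the Brauer quotient at a vertex is projective and projectives are detected by their Brauer characters, after which a downward induction on $|Q|$ recovers the whole module. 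Now for a $p$-regular $\bar c\in\N_G(Q)/Q$, lift it to a $p'$-element $c\in\N_G(Q)$; then $\langle Q,c\rangle=Q\rtimes\langle c\rangle$ is $p$-hypo-elementary, and the Brauer character of the permutation module $F[X^Q]$ evaluated at $\bar c$ equals $|(X^Q)^{\bar c}|=|X^{\langle Q,c\rangle}|$, which by hypothesis equals $|Y^{\langle Q,c\rangle}|$. Hence the Brauer characters of $F[X^Q]$ and $F[Y^Q]$ agree for every $p$-subgroup $Q$, and the detection theorem yields $F[X]\cong F[Y]$. The hard part — and the only place where something non-formal is used — is precisely this detection theorem: that a $p$-permutation module is pinned down by the Brauer characters of all of its Brauer quotients. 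This is the genuine content of Conlon's theorem, resting on the theory of vertices, sources and the Brauer construction, and I would cite it rather than reprove it; everything else above is elementary.
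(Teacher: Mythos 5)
Your argument is correct in outline, but it is worth saying up front that the paper does not actually prove this theorem: its ``proof'' is the citation to \cite[\S 81B]{CR2} and \cite[\S 5.5--5.6]{Benson}, and the route you take --- reduce to the fact that a $p$-permutation module is determined by the Brauer characters of its Brauer quotients $M(Q)$ over all $p$-subgroups $Q$, then compute those Brauer characters for permutation modules as fixed-point counts under the $p$-hypo-elementary groups $Q\rtimes\langle c\rangle$ --- is essentially the proof given in the second of those references, so you have reconstructed the cited argument rather than found a different one. Your treatment of the image (full mark homomorphism via the triangular table of marks, then coordinate projection) and of the easy inclusion (restrict to $U$, apply the Brauer construction at $O_p(U)$, and use that permutation modules of a cyclic group of order prime to $p$ determine the underlying sets) are both fine. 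Two points need tightening. First, your parenthetical justification of the detection theorem is not literally correct: $M(Q)$ is not projective in general; the summands of $M$ with vertex exactly $Q$ contribute the projective part of $M(Q)$, those with strictly larger vertex contribute the rest, and it is the downward induction on $|Q|$ (which you do invoke), together with the fact that projective $F[\N_G(Q)/Q]$-modules are determined by their Brauer characters, that lets one peel off multiplicities vertex by vertex; since you cite the detection theorem rather than prove it, this is only a matter of phrasing. Second, and more substantively, your reduction from a discrete valuation ring to its residue field uses unique lifting of $p$-permutation modules, which requires completeness (idempotent lifting); the paper's standing hypotheses allow an arbitrary local ring with finite residue field of characteristic $p$, so you should either pass to the completion first and then descend, or cite the integral form of Conlon's theorem directly; relatedly, identifying $[M]=[N]$ in $\R_F(G)$ with $M\cong N$ uses Krull--Schmidt, which is automatic over a field or a complete local ring but not in general. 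Neither issue affects the field case, which is what most of the paper's fixed-point computations rely on.
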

\begin{proof}
See \cite[\S 81B]{CR2} or \cite[\S 5.5--5.6]{Benson}.
\end{proof}

\begin{corollary}\label{cor:ConlonRank}
The group $\K_F(G)$ is free abelian of rank equal to the number of conjugacy
classes of non-$p$-hypo-elementary subgroups of $G$.
\end{corollary}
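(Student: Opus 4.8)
The plan is to deduce everything formally from Conlon's Induction Theorem (Theorem \ref{thm:Conlon}) together with the description of $\B(G)$ as an abelian group recalled in Section \ref{s:intro}. Write $\phi=\prod_{U\in\cP}f_U\colon\B(G)\to\prod_{U\in\cP}\Z$, so that $\K_F(G)=\ker\phi$ by that theorem. First I would recall that $\B(G)$ is free abelian of finite rank $s$, where $s$ is the number of conjugacy classes of subgroups of $G$. Since a subgroup of a finitely generated free abelian group is itself free abelian of rank at most $s$, it follows immediately that $\K_F(G)=\ker\phi$ is free abelian of finite rank; this settles the first assertion.

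For the rank computation I would pass to the short exact sequence $0\to\K_F(G)\to\B(G)\to\phi(\B(G))\to 0$ and use that the additive rank is additive along short exact sequences of finitely generated abelian groups (equivalently, tensor the sequence with $\Q$ to obtain an exact sequence of finite-dimensional $\Q$-vector spaces). This yields $\rank\K_F(G)=s-\rank\phi(\B(G))$. By Theorem \ref{thm:Conlon}, the image $\phi(\B(G))$ has finite additive index in the free abelian group $\prod_{U\in\cP}\Z$, so $\rank\phi(\B(G))=|\cP|$, the number of conjugacy classes of $p$-hypo-elementary subgroups of $G$. Hence $\rank\K_F(G)=s-|\cP|$.

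To conclude, I would observe that each conjugacy class of subgroups of $G$ either consists of $p$-hypo-elementary subgroups or does not, so that $s-|\cP|$ equals precisely the number of conjugacy classes of non-$p$-hypo-elementary subgroups of $G$, which is the claimed rank.

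There is no serious obstacle here: the corollary is a purely formal consequence of Conlon's theorem and of $\B(G)$ being free abelian on the conjugacy classes of subgroups. The only mildly delicate point — and it is entirely standard — is the additivity of rank along the short exact sequence above, for which it is cleanest to tensor with $\Q$ and count dimensions.
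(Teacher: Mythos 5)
Your argument is correct and is exactly the formal deduction from Theorem \ref{thm:Conlon} that the paper intends (the corollary is stated there without proof, as an immediate consequence): kernel of a map from the free abelian group $\B(G)$ of rank $s$, with image of finite index in $\prod_{U\in\cP}\Z$, hence rank $s-|\cP|$. Nothing further is needed.
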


\begin{corollary}\label{cor:Conlon}
There exists a Brauer relation over $F$ of the form $a_GG +\sum_{U\in \cP} a_UU\in \K_F(G)$,
where $a_G$, $a_U\in \Z$.
\end{corollary}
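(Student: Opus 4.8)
The plan is to deduce this from Conlon's induction theorem (Theorem~\ref{thm:Conlon}) and its rank corollary (Corollary~\ref{cor:ConlonRank}), together with the classical triangularity of the table of marks. If $G$ is itself $p$-hypo-elementary then $G\in\cP$ and there is nothing to prove, so assume $G$ is not $p$-hypo-elementary. Let $\mathcal N$ be a full set of representatives of the conjugacy classes of non-$p$-hypo-elementary subgroups of $G$, so that $G\in\mathcal N$ and, as abelian groups, $\B(G)=\Z\langle\cP\rangle\oplus\Z\langle\mathcal N\rangle$, where $\Z\langle\cdot\rangle$ denotes the subgroup spanned by the indicated (conjugacy classes of) subgroups.

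The one substantial step is to show $\K_F(G)\cap\Z\langle\cP\rangle=0$, i.e.\ that there is no non-zero Brauer relation supported entirely on $p$-hypo-elementary subgroups. For subgroups $U,V\le G$ one has $f_V(G/U)\neq 0$ precisely when $V$ is $G$-subconjugate to $U$, and $f_U(G/U)=|\N_G(U)/U|>0$. Since $G$-subconjugacy is a partial order on conjugacy classes of subgroups, I can enumerate $\cP=\{U_1,\dots,U_k\}$ so that $U_i$ being $G$-subconjugate to $U_j$ forces $i\le j$; then $\bigl(f_{U_i}(G/U_j)\bigr)_{1\le i,j\le k}$ is upper triangular with strictly positive diagonal, hence invertible over $\Q$. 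Consequently, any $\Theta=\sum_j a_{U_j}U_j\in\Z\langle\cP\rangle$ with $f_{U_i}(\Theta)=0$ for all $i$ has all $a_{U_j}=0$; by Theorem~\ref{thm:Conlon} this is exactly the assertion.

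Now let $\pi\colon\B(G)\to\Z\langle\mathcal N\rangle$ be the projection killing $\Z\langle\cP\rangle$. By the previous step $\pi$ is injective on $\K_F(G)$, and by Corollary~\ref{cor:ConlonRank} the group $\K_F(G)$ is free abelian of rank $|\mathcal N|=\rank(\Z\langle\mathcal N\rangle)$; an injection between free abelian groups of equal finite rank has image of finite index, so $\pi(\K_F(G))$ has finite index in $\Z\langle\mathcal N\rangle$. Hence some positive multiple $a_G\,G$ of the basis element $G$ lies in $\pi(\K_F(G))$; picking $\Theta\in\K_F(G)$ with $\pi(\Theta)=a_G\,G$ gives $\Theta-a_G\,G\in\ker\pi=\Z\langle\cP\rangle$, so $\Theta=a_G\,G+\sum_{U\in\cP}a_U\,U$ for suitable integers $a_U$ and an integer $a_G\ge 1$, a relation of the required form.

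The hard part is just the linear independence in the second paragraph — equivalently, that the permutation modules $F[G/U]$ with $U$ ranging over $p$-hypo-elementary subgroups are $\Z$-linearly independent in $\R_F(G)$ — and this is immediate from the triangularity of the table of marks; everything else is elementary rank counting. One could instead argue directly with the cokernel of the Conlon map and never mention $\mathcal N$ explicitly, but the formulation above seems cleanest.
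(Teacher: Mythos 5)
Your proof is correct and follows essentially the same route as the paper's: the crux in both is that the matrix $(f_{U_i}(G/U_j))_{U_i,U_j\in\cP}$ is triangular with non-zero diagonal, so that no non-zero element of $\K_F(G)$ is supported on $\cP$ alone. The paper finishes by observing that the $\#\cP+1$ classes $F[G/G]$ and $F[G/U]$, $U\in\cP$, must be linearly dependent in $\R_F(G)$ because by Theorem~\ref{thm:Conlon} they map into a lattice of rank $\#\cP$, whereas you reach the same conclusion by projecting away from $\Z\langle\cP\rangle$ and counting ranks via Corollary~\ref{cor:ConlonRank}; the difference is purely one of packaging.
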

\begin{proof}
If $G$ is $p$-hypo-elementary, then the statement is empty, so assume
that $G$ is not $p$-hypo-elementary.
By Theorem \ref{thm:Conlon}, the set consisting of $F[G/G]$ and of
$F[G/U]$ for $U\in\cP$ is linearly dependent in $\R_F(G)$. On the other hand, it
is easy to see that if the elements $U_i$ of $\cP$ are ordered in non-descending order
with respect to size, then the matrix $(f_{U_i}(U_j))_{U_i,U_j\in \cP}$ is
triangular with non-zero entries on the diagonal, so by Theorem \ref{thm:Conlon}
the set $\{F[G/U]: U\in \cP\}$ is linearly independent in $\R_F(G)$. This proves
the corollary.
\end{proof}

Corollary \ref{cor:Conlon} is also often referred to as Conlon's Induction
Theorem.
%
%
The following theorem is the basic structure result on $\Prim_F$.
\begin{theorem}\label{thm:quotients}
Let G be a finite group that is not a $(p,q)$-Dress group for any prime number $q$.
Then the following trichotomy holds:
\begin{enumerate}[leftmargin=*, label={\upshape(\alph*)}]
\item\label{item:allquophypo} if all proper quotients of G are $p$-hypo-elementary, then $\Prim_F(G) \cong \Z$;
\item\label{item:allquodress} if there exists a prime number $q$ such that all proper
quotients of G are $(p,q)$-Dress groups, and at least one of them is not
$p$-hypo-elementary, then $\Prim_F(G) \cong \Z/q\Z$;
\item if there exists a proper quotient of $G$ that is not a $(p,q)$-Dress group
for any prime number $q$, or if there exist distinct prime numbers $q_1$ and $q_2$
and, for $i=1$ and $2$, a proper quotient of $G$ that is a non-$p$-hypo-elementary
$(p,q_i)$-Dress group, then $\Prim_{F}(G)$ is trivial.
\end{enumerate}
In cases \ref{item:allquophypo} and \ref{item:allquodress}, $\Prim_F(G)$ is
generated by any relation in which $G$ has coefficient $1$.
\end{theorem}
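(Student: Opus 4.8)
The plan is to prove Theorem~\ref{thm:quotients} by analysing the exact sequence that relates $\K_F(G)$ to the $\K_F$-groups of its proper quotients, and then combining this with the rank information coming from Conlon's Induction Theorem (Corollary~\ref{cor:ConlonRank}). Recall that, by definition, $\Prim_F(G)$ is the cokernel of the map $\bigoplus_{N\lhd G, N\neq 1}\Ind\Inf\colon \bigoplus_{N}\K_F(G/N) \to \K_F(G)$ together with induction from proper subgroups; the first reduction I would make is to observe that, since $G$ is assumed not to be a $(p,q)$-Dress group for any $q$, every proper \emph{subquotient} of $G$ is handled inductively, so that $\Prim_F(G)$ is controlled by the proper \emph{quotients} alone. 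Concretely, I expect that $\Prim_F(G)$ is (a quotient of) the cokernel of $\bigoplus_{1\neq N\lhd G}\K_F(G/N)\to \K_F(G)$, with the subtlety that one must also quotient by relations inflated-and-induced from proper \emph{subgroups}; the point of the $(p,q)$-Dress hypothesis is precisely that, by Theorem~\ref{thm:pqDress} (stated later in the paper) and the classification of $\Prim_F$ for smaller groups, no new primitive relations arise from proper subgroups that aren't already visible through quotients.

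The heart of the argument is then a rank count. First I would compute the rank of $\K_F(G)$ and of each $\K_F(G/N)$ using Corollary~\ref{cor:ConlonRank}: these are the numbers of conjugacy classes of non-$p$-hypo-elementary subgroups of $G$, respectively of $G/N$. The key combinatorial fact is that a subgroup $H\le G$ is non-$p$-hypo-elementary and contains some minimal normal subgroup $N$ of $G$ if and only if $H/N$ is a non-$p$-hypo-elementary subgroup of $G/N$ (using that $H/N$ is $p$-hypo-elementary forces $H$ to be $p$-hypo-elementary since $N\le O_p(H)$ would be needed, which requires care — this is exactly where one uses minimality of $N$ and the Dress condition). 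Summing over minimal normal subgroups and using inclusion–exclusion, one finds that the image of $\bigoplus_N \K_F(G/N)$ inside $\K_F(G)$ has corank equal to the number of non-$p$-hypo-elementary subgroups $H$ of $G$ that contain \emph{no} minimal normal subgroup — and when $G$ is not $(p,q)$-Dress, the only such $H$ is $G$ itself. Hence $\Prim_F(G)$ has rank at most $1$, and rank exactly $1$ unless $G$ is $p$-hypo-elementary (in which case $\K_F(G)=0$).

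To pin down the torsion and get the precise trichotomy, I would argue as follows. By Corollary~\ref{cor:Conlon} there is a relation $\Theta = G + \sum_{H\lneq G}a_H H$, and by the rank count its image generates $\Prim_F(G)\otimes\Q$; so $\Prim_F(G)$ is cyclic, generated by the class of $\Theta$, and any relation with $G$-coefficient $1$ differs from $\Theta$ by an imprimitive relation, hence has the same class — this gives the last sentence of the theorem. The order of this class is then determined by how divisible $\Theta$ can be made modulo imprimitive relations. In case~\ref{item:allquophypo}, all proper quotients have trivial $\K_F$ (being $p$-hypo-elementary), so the only imprimitive contributions come from proper subgroups, and one shows these cannot reduce the $G$-coefficient below $1$ in absolute value, giving $\Prim_F(G)\cong\Z$. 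In case~\ref{item:allquodress}, the proper quotients are $(p,q)$-Dress, and by the known structure of $\Prim_F$ for such groups (which are $p$-hypo-elementary-by-$q$-group in the relevant sense) the inflated relations have $G$-coefficients that are exactly the multiples of $q$ one can achieve, forcing $\Prim_F(G)\cong\Z/q\Z$. In case (c), two incompatible primes $q_1\neq q_2$ appear, so one can make the $G$-coefficient both a multiple of $q_1$ and of $q_2$, hence equal to $1$ via an imprimitive relation, so $\Theta$ itself becomes imprimitive and $\Prim_F(G)=0$; the same conclusion holds if some proper quotient is not $(p,q)$-Dress at all, since then that quotient already has trivial $\Prim_F$ by induction while still contributing enough relations. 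The main obstacle I anticipate is making the torsion computation rigorous: bounding exactly which $G$-coefficients are attainable among inflated-and-induced relations requires careful bookkeeping of how the coefficient of $G$ (equivalently, of the regular representation's contribution) transforms under $\Ind$ and $\Inf$, and this is where one genuinely needs the detailed structure of $(p,q)$-Dress groups rather than just rank counts.
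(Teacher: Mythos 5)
First, a point of comparison: the paper does not prove Theorem~\ref{thm:quotients} at all --- it is quoted verbatim from \cite[Theorem 1.2]{GFIs}, where it is established using the formalism of Green functors with inflation. So your proposal is necessarily a from-scratch attempt, and it has to be judged on its own terms. As a plan it identifies the right quantities (ranks via Corollary~\ref{cor:ConlonRank}, the $G$-coefficient as the invariant detecting primitivity, and the last sentence of the theorem does follow once one knows $\Prim_F(G)$ is cyclic generated by a relation with $G$-coefficient $1$), but two of its load-bearing steps are not merely unproven --- they are false as stated.

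The first is the reduction to quotients. The imprimitive relations are generated by $\Ind_{G/H}\Inf_{H/N}$ over all proper subquotients, including the case $N=\{1\}$, i.e.\ plain induction from proper subgroups. Your justification for discarding these (``no new primitive relations arise from proper subgroups that aren't already visible through quotients'') appeals to Theorem~\ref{thm:pqDress}, which concerns $G$ itself being a $(p,q)$-Dress group with non-trivial $p$-core and says nothing about what $\Ind_{G/H}\K_F(H)$ contributes for proper subgroups $H$ of a non-Dress $G$. The second is the corank count. Inflation from $G/N$ lands in the span of the subgroups $H\supseteq N$ with $H/N$ non-$p$-hypo-elementary; this is in general a \emph{proper} subset of the non-$p$-hypo-elementary subgroups containing $N$, since $H$ can be non-$p$-hypo-elementary while $H/N$ is $p$-hypo-elementary (take $N\cong C_3\times C_3$ minimal normal and $H=N\rtimes C_2$ with $p=2$). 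Moreover the conclusion you draw --- that the only non-$p$-hypo-elementary subgroup of $G$ containing no minimal normal subgroup is $G$ itself --- fails already for $G=S_5$ and $H=S_4$, which is not a $(p,q)$-Dress configuration excluded by the hypotheses. Both failures point the same way: induction from proper subgroups must do genuine work to kill the remaining free rank, and that is precisely the part of the argument you have not supplied. Finally, the trichotomy itself (which $G$-coefficients are attainable by imprimitive relations, hence whether the answer is $\Z$, $\Z/q\Z$, or $0$) is asserted rather than derived; this divisibility analysis is the actual content of \cite[Theorem 1.2]{GFIs} and cannot be recovered from rank counts alone.
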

\begin{proof}
See \cite[Theorem 1.2]{GFIs}.
\end{proof}

\begin{corollary}\label{cor:SES}
Let $G$ be a finite group, and suppose that $\Prim_F(G)$ is non-trivial.
Then $G$ is an extension of the form
\beq
1\rightarrow S^d \rightarrow G \rightarrow D \rightarrow 1,
\eeq
where $S$ is a finite simple group, $d\ge0$ is an integer, and $D$ is a $(p,q)$-Dress group
for some prime number $q$.
Moreover, if $d\ge 1$ and $S$ is not cyclic, then the canonical map $D\rar \Out(S^d)$
is injective, and $S^d$ has no proper non-trivial subgroups that are
normal in $G$. In this case, $\Prim_F(G)\cong \Z$ if $D$ is $p$-hypo-elementary,
and $\Prim_F(G)\cong \Z/q\Z$ otherwise. 
\end{corollary}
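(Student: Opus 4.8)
The plan is to derive Corollary~\ref{cor:SES} from Theorem~\ref{thm:quotients} together with some elementary group theory about minimal normal subgroups. First I would dispose of the case where $G$ itself is a $(p,q)$-Dress group for some prime $q$: then the statement holds trivially with $d=0$ and $D=G$. So assume henceforth that $G$ is not a $(p,q)$-Dress group for any $q$. Since $\Prim_F(G)$ is assumed non-trivial, we are in case~\ref{item:allquophypo} or~\ref{item:allquodress} of Theorem~\ref{thm:quotients}, and in particular there exists a prime number $q$ such that \emph{every} proper quotient of $G$ is a $(p,q)$-Dress group (in case~\ref{item:allquophypo} every proper quotient is $p$-hypo-elementary, hence a $(p,q)$-Dress group for any $q$; pick any $q$). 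The strategy is now to let $M$ be a minimal non-trivial normal subgroup of $G$ and to show that $M$ already accounts for the required extension.

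Next I would invoke the structure of minimal normal subgroups: any minimal normal subgroup $M$ of a finite group is a direct product $S^d$ of copies of a single simple group $S$, with $G$ permuting the factors transitively. Set $D = G/M$. Since $M \ne 1$, $D$ is a \emph{proper} quotient of $G$, so by the above it is a $(p,q)$-Dress group, giving the exact sequence $1 \to S^d \to G \to D \to 1$ with $d \ge 1$. This handles the first assertion. (If one wants to also cover $d = 0$ uniformly, note that $G$ being a $(p,q)$-Dress group is exactly the $d=0$ case, already dealt with.)

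For the refined statements when $S$ is non-abelian: since $S^d$ is a minimal normal subgroup, it has no proper non-trivial $G$-normal subgroup by definition --- that is one of the claims immediately. For injectivity of $D = G/S^d \to \Out(S^d)$, I would argue as follows. The group $G$ acts on $S^d$ by conjugation, giving $G \to \Aut(S^d)$; the kernel is $C_G(S^d)$, and the induced map $G/S^d \to \Out(S^d)$ has kernel $C_G(S^d)S^d/S^d$. Because $S$ is non-abelian, $S^d$ has trivial centre, so $C_G(S^d) \cap S^d = 1$; hence $C_G(S^d)$ is a normal subgroup of $G$ meeting $S^d$ trivially and centralising it, so $C_G(S^d)S^d = C_G(S^d) \times S^d$ and $C_G(S^d)$ is itself normal in $G$. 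If $C_G(S^d)$ were non-trivial it would contain a minimal normal subgroup $N$ of $G$; but then $N$ and $S^d$ are distinct minimal normal subgroups (they intersect trivially since $N$ centralises $S^d$ while $Z(S^d)=1$), and $N \cdot S^d / N \cong S^d$ would be a non-trivial normal subgroup of $G/N$ isomorphic to a product of non-abelian simple groups, contradicting that $G/N$ is a (necessarily soluble-by-nothing... more precisely) $(p,q)$-Dress group, which is soluble. Hence $C_G(S^d) = 1$ and the map to $\Out(S^d)$ is injective. Finally, the dichotomy $\Prim_F(G) \cong \Z$ versus $\Z/q\Z$ according to whether $D$ is $p$-hypo-elementary: this is read off directly from Theorem~\ref{thm:quotients}\ref{item:allquophypo} and~\ref{item:allquodress}, once we know that \emph{whether} we are in case~\ref{item:allquophypo} or~\ref{item:allquodress} is governed by whether all proper quotients of $G$ are $p$-hypo-elementary, and $D = G/S^d$ is the ``largest'' proper quotient in the sense that every proper quotient of $G$ is a quotient of $D$ (because $S^d$, being a minimal normal subgroup with $S$ non-abelian, is contained in every non-trivial normal subgroup? — this needs care).

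The step I expect to be the main obstacle is precisely that last point: establishing that the alternative ``$D$ is $p$-hypo-elementary'' versus ``$D$ is not'' correctly matches cases~\ref{item:allquophypo} and~\ref{item:allquodress} of Theorem~\ref{thm:quotients}. One direction is clear: if $D = G/S^d$ is not $p$-hypo-elementary then not all proper quotients of $G$ are, so we are in case~\ref{item:allquodress} and get $\Z/q\Z$. For the converse I would need that if $D$ \emph{is} $p$-hypo-elementary then so is every proper quotient of $G$ --- equivalently that $S^d$ is contained in every non-trivial normal subgroup of $G$, so that every proper quotient of $G$ is a quotient of $D$. This holds because when $S$ is non-abelian and $S^d$ is a minimal normal subgroup, the argument of the previous paragraph (applied to an arbitrary non-trivial normal subgroup $N$ in place of $C_G(S^d)$) shows $N \cap S^d \ne 1$, hence $= S^d$ by minimality, i.e. $S^d \le N$. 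Assembling these pieces gives the corollary; the soluble-quotient input from Theorem~\ref{thm:quotients} is doing the real work, and the rest is bookkeeping with minimal normal subgroups and centralisers.
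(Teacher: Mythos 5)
Your proposal is correct and follows essentially the same route as the paper: dispose of the case where $G$ is itself a $(p,q)$-Dress group, take a minimal normal subgroup $S^d$ from a chief series, apply Theorem~\ref{thm:quotients} to see that $G/S^d$ is a $(p,q)$-Dress group, and use the triviality of $Z(S^d)$ together with the solubility of Dress groups to kill the centraliser and get injectivity into $\Out(S^d)$. Your extra care in checking that $S^d$ lies in every non-trivial normal subgroup (so that the $p$-hypo-elementary dichotomy for $D$ really matches cases~\ref{item:allquophypo} and~\ref{item:allquodress} of Theorem~\ref{thm:quotients}) fills in a detail the paper leaves implicit, but it is the same argument in substance.
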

\begin{proof}
If $G$ is a $(p,q)$-Dress group for some prime number $q$, then the assertion
is clear, so suppose that it is not.
The group $G$ has a chief series, so there exists a normal subgroup
$W\cong S^d$ of $G$, where $S$ is a simple group and $d\ge 1$.
By Theorem \ref{thm:quotients}, the quotient $G/W$ is a $(p,q)$-Dress
group for some prime number $q$.

Now suppose that $S$ is not cyclic. Let $K$ be the kernel of the map
$G\rar\Aut(S^d)$ given by conjugation. The centre of $S^d$ is trivial,
so $K \cap S^d =\{1\}$. If $K$ is non-trivial, then $G/K$ is a proper
quotient that is not soluble, and in particular not a $(p,q)$-Dress group,
contradicting Theorem \ref{thm:quotients}.
So $G$ injects into $\Aut(S^d)$, and thus
$G/S^d=D$ injects into $\Out(S^d)$. Similarly, if $N\normal G$
is a proper subgroup of $S^d$, then $G/N$ is not soluble, and in particular
not a $(p,q)$-Dress group, contradicting Theorem \ref{thm:quotients}.
Finally, the description of $\Prim_F(G)$ is given by Theorem \ref{thm:quotients}.
\end{proof}

\section{Main reduction in soluble groups}\label{sec:soluble}

In this section, we analyse $\Prim_F(G)$ for soluble groups $G$. The main results
of the section are Theorem \ref{thm:pqDress}, concerning $(p,q)$-Dress groups,
and Theorem \ref{thm:onlyif}, which gives necessary conditions on a soluble group
$G$ for $\Prim_F(G)$ to be non-trivial. The first of these is proved by comparing
the consequences of Conlon's Induction Theorem for $G$ and for its subquotients,
while the second is derived from a careful analysis of the
implications of Theorem \ref{thm:quotients} for soluble groups.

\begin{lemma}\label{lem:pqDress}
Let $q$ be a prime number different from $p$, and let
$G=P\rtimes(C\rtimes Q)$ be a $(p,q)$-Dress group, where $P$ is a $p$-group,
$Q$ is a $q$-group, and $C$ is a cyclic group of order coprime to $pq$.
Let $\cS$ be a full set of $G$-conjugacy class representatives of subgroups of
$P$. For each $U\in \cS$, let $N_U$ be a $(-p)$-Hall subgroup of $\N_G(U)$,
and let $\cT_U$ be a full set of $N_U$-conjugacy class representatives of subgroups
of $N_U$. Then:
\begin{enumerate}[leftmargin=*, label={\upshape(\alph*)}]
\item\label{item:conjugacy} for every $U\in \cS$, and all subgroups $V_1$, $V_2$
of $N_U$, $V_1$ and $V_2$ are $N_U$-conjugate if and only if they are $\N_G(U)$-conjugate;
\item\label{item:subgroups} for every subgroup $H$ of $G$, there exists a unique $U\in \cS$
and a unique $V\in \cT_U$ such that $H$ is $G$-conjugate to $U\rtimes V$.
\end{enumerate}
\end{lemma}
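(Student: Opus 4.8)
The plan is to exploit that $P=O_p(G)$ is normal of order equal to the full $p$-part of $|G|$, hence is the \emph{unique} Sylow $p$-subgroup of $G$. I would first record the consequences. For any $H\le G$, the intersection $H\cap P$ is the unique, and therefore normal, Sylow $p$-subgroup of $H$, so by Schur--Zassenhaus $H=(H\cap P)\rtimes L$ with $|L|$ coprime to $p$. Applied to $\N_G(U)$ for $U\le P$, this gives $\N_G(U)=\N_P(U)\rtimes N_U$, where $\N_P(U)=\N_G(U)\cap P$ is the unique Sylow $p$-subgroup of $\N_G(U)$ (so $\N_P(U)=O_p(\N_G(U))$) and $N_U$ is a complement; an order count shows that the $(-p)$-Hall subgroups of $\N_G(U)$ are precisely such complements, so the $N_U$ in the statement is one of them. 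Crucially, the quotient map $\rho\colon\N_G(U)\to\N_G(U)/\N_P(U)$ restricts on $N_U$ to an isomorphism $N_U\xrightarrow{\ \sim\ }\N_G(U)/\N_P(U)$, being injective since $N_U\cap\N_P(U)=1$ and surjective by order.

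For part \ref{item:conjugacy} the ``only if'' direction is immediate. For ``if'', suppose ${}^gV_1=V_2$ with $g\in\N_G(U)$ and $V_1,V_2\le N_U$. I would write $g=an$ with $a\in\N_P(U)$ and $n\in N_U$; then $V_2={}^a({}^nV_1)$, and $W:={}^nV_1$ is again a subgroup of $N_U$. Since $\rho(a)=1$, we get $\rho(W)=\rho(V_2)$, and because $W,V_2\le N_U$ and $\rho|_{N_U}$ is injective, this forces $W=V_2$. Hence $V_2={}^nV_1$ with $n\in N_U$, which is what was needed.

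For part \ref{item:subgroups}, existence would be established by a chain of conjugations. Given $H\le G$, first conjugate $H$ (by an arbitrary element of $G$; harmless, since intersecting with the normal subgroup $P$ commutes with conjugation) so that $H\cap P=U$, the representative in $\cS$ of the class of $H\cap P$. Write $H=U\rtimes L$ with $|L|$ coprime to $p$; as $U\normal H\le\N_G(U)$, the subgroup $L$ lies in some $(-p)$-Hall subgroup of $\N_G(U)$, and all of these are conjugate in $\N_G(U)$ (Schur--Zassenhaus, as they are exactly the complements of $\N_P(U)$), so after conjugating $H$ by a suitable element of $\N_G(U)$ — which fixes $U$ — I may assume $L\le N_U$. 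Finally conjugate $L$ within $N_U$ to its representative $V\in\cT_U$, so that $H$ becomes $U\rtimes V$. For uniqueness, suppose $U_1\rtimes V_1$ and $U_2\rtimes V_2$, with $U_i\in\cS$ and $V_i\in\cT_{U_i}$, are $G$-conjugate. I would first note that $(U_i\rtimes V_i)\cap P=U_i$: an element $uv$ with $u\in U_i$ and $v\in V_i$ lies in $P$ only if its image in $G/P$, namely that of $v$, is trivial, forcing $v\in V_i\cap P=1$. Hence the conjugating element carries $U_1$ to $U_2$, so $U_1=U_2=:U$ and that element, say $g$, lies in $\N_G(U)$. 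Then ${}^gV_1$ and $V_2$ are two complements of $U$ in $U\rtimes V_2$, hence conjugate there by some $h\in U\rtimes V_2\le\N_G(U)$; thus $hg\in\N_G(U)$ conjugates $V_1$ to $V_2$, and part \ref{item:conjugacy} upgrades this to $N_U$-conjugacy, forcing $V_1=V_2$.

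The step I expect to be the crux is part \ref{item:conjugacy}, together with its use in the uniqueness argument: this is precisely where one needs $\N_P(U)$ to be the $p$-core of $\N_G(U)$ and $N_U$ to map isomorphically onto the quotient, rather than merely that $\N_G(U)$ is soluble. Everything else is bookkeeping; the one point to watch is that each conjugation in the existence argument is performed either before $U$ has been pinned down, or by an element normalising $U$, so that no earlier normalisation is destroyed.
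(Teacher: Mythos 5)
Your proof is correct and follows essentially the same route as the paper's: both rest on the decomposition $\N_G(U)=\N_P(U)\rtimes N_U$ with $\N_P(U)$ the unique (normal) Sylow $p$-subgroup of $\N_G(U)$, reduce part (a) to showing that the $\N_P(U)$-component of the conjugating element is irrelevant, and prove (b) by the same chain of conjugations for existence and the same Sylow-plus-Hall-conjugacy argument, combined with (a), for uniqueness. The only (immaterial) difference is in (a), where the paper shows the $p$-part $u$ centralises $V_1$ via the commutator computation $[v,u]\in\N_P(U)\cap N_U=\{1\}$, while you pass to the quotient $\N_G(U)/\N_P(U)$ and use injectivity of $\rho|_{N_U}$ on subgroups.
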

\begin{proof}
To prove part \ref{item:conjugacy}, let $U\in \cS$, and $V_1$, $V_2\le N_U$, and suppose that
there exists an element $g$ of $\N_G(U)$ such that ${}^gV_1 = V_2$.
Since $\N_G(U)=\N_P(U)\rtimes N_U$, we may write $g=nu$, where $u\in \N_P(U)$
and $n\in N_U$. Let $v\in V_1$. By assumption, ${}^gv\in V_2\subseteq N_U$, so
${}^uv\in N_U$, and hence $[v,u]=v(uv^{-1}u^{-1})\in N_U$. On the other hand,
$\N_P(U)=\N_G(U)\cap P$ is normal in $\N_G(U)$, so
$[v,u]=(vuv^{-1})u^{-1}\in \N_P(U)$. Since $\N_P(U)\cap N_U=\{1\}$, this
implies that $u$ and $v$ commute. Since $v$ was arbitrary, we deduce that
$u$ centralises $V_1$, so that ${}^gV_1={}^nV_1=V_2$, as claimed.

Now, we prove the existence statement of part \ref{item:subgroups}.
Let $H$ be a subgroup of $G$, and let $U=H\cap P$. After replacing $H$ with a
subgroup that is $G$-conjugate to it if necessary, we may assume that $U\in \cS$.
We then have $H\le \N_G(U)$. Let $V$ be a $(-p)$-Hall subgroup of $H$, which
is contained in a $(-p)$-Hall subgroup of $\N_G(U)$. Since all $(-p)$-Hall
subgroups of $\N_G(U)$ are conjugate to each other, we may assume, after
possibly replacing $H$ with a subgroup that is $\N_G(U)$-conjugate to it, that
$V$ is contained in $N_U$, so that after possibly replacing $H$ by a subgroup
that is $N_U$-conjugate to it, we may assume that $V\in \cT_U$,
which concludes the proof of the existence statement.

Finally, we prove uniqueness. Let $U_1$, $U_2\in \cS$, and let $V_i\in \cT_{U_i}$
for $i=1$, $2$ be such that $H_1=U_1\rtimes V_1$ is $G$-conjugate to
$H_2=U_2\rtimes V_2$. Since $U_i$ is the unique Sylow $p$-subgroup of $H_i$ for $i=1$ and $2$,
this implies that $U_1$ and $U_2$ are $G$-conjugate; and since both are contained
in $P$, and $\cS$ is assumed to be a complete set of distinct conjugacy class
representatives, this implies that $U_1=U_2$. Write $U=U_1$. We deduce that $H_1$ and
$H_2$ are $\N_G(U)$-conjugate. Since $V_i$ is a $(-p)$-Hall subgroup of $H_i$
for $i=1$ and $2$, it follows that $V_1$ and $V_2$ are also $\N_G(U)$-conjugate,
so by part \ref{item:conjugacy}, they are $N_U$-conjugate. Since $\cT_U$ is a
full set of representatives of $N_U$-conjugacy classes, we have $V_1=V_2$.
\end{proof}

\begin{thm}\label{thm:pqDress}
Let $q$ be a prime number different from $p$, and let $G$ be a $(p,q)$-Dress
group with non-trivial $p$-core. Then $\Prim_F(G)$ is trivial.
\end{thm}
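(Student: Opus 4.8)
The plan is to use Conlon's Induction Theorem (Theorem~\ref{thm:Conlon}) to translate the statement ``$\Prim_F(G)$ is trivial'' into a statement about fixed points of $G$-sets under $p$-hypo-elementary subgroups, and then to compare this data for $G$ with the analogous data for its proper subquotients. Concretely, $\Prim_F(G)$ is trivial precisely when every Brauer relation over $F$ in $\K_F(G)$ is imprimitive, i.e.\ lies in the subgroup generated by $\Ind$ and $\Inf$ of relations from proper subquotients. By Corollary~\ref{cor:ConlonRank}, $\K_F(G)$ is free of rank equal to the number of conjugacy classes of non-$p$-hypo-elementary subgroups of $G$, and similarly for each subquotient; so the goal reduces to showing that the imprimitive relations already span a subgroup of the full rank. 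The natural strategy is to exhibit, for each conjugacy class of non-$p$-hypo-elementary subgroups $H$ of $G$, an imprimitive relation ``detecting'' $H$, so that these relations are visibly independent (e.g.\ via a triangularity argument with respect to the partial order by inclusion, as in the proof of Corollary~\ref{cor:Conlon}).

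The key structural input is Lemma~\ref{lem:pqDress}: writing $G = P\rtimes(C\rtimes Q)$ with $P$ the (non-trivial) $p$-core, it gives a clean parametrisation of conjugacy classes of subgroups of $G$ as $U\rtimes V$, where $U$ runs over conjugacy classes of subgroups of $P$ and $V$ runs over conjugacy classes of subgroups of the $(-p)$-Hall group $N_U$ of $\N_G(U)$. Note $U\rtimes V$ is $p$-hypo-elementary if and only if $V$ is cyclic (since $U$ is a normal $p$-subgroup with quotient $V$, of order coprime to $p$). So the non-$p$-hypo-elementary subgroups of $G$ correspond exactly to pairs $(U, V)$ with $V$ non-cyclic. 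The idea would then be: for each such $U$, the group $N_U$ is a $q$-quasi-elementary group of order coprime to $p$, so $\K_\Q(N_U) = \K_F(N_U)$ and moreover (by Artin's theorem, Theorem~\ref{thm:Artin}, together with the known rational theory) $\K_F(N_U)$ is generated by relations of the form $V_0 - (\text{cyclic-index correction})$ detecting non-cyclic $V_0\le N_U$. One then induces such relations from $\N_G(U)$ up to $G$ — and since $\N_G(U) = \N_P(U)\rtimes N_U$ with $N_U$ a subquotient-sized piece, these are imprimitive (they are inflated–induced from the proper subquotient $\N_G(U)$, which is proper because $P\neq 1$ forces $\N_G(U)\subsetneq G$ unless $U = P$; the case $U=P$ needs $N_P = C\rtimes Q$ to be a proper subquotient, which holds as long as $P\neq 1$).

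The main obstacle, I expect, is handling the ``top'' case $U = P$ cleanly: here $\N_G(P) = G$ itself, so relations supported on subgroups $P\rtimes V$ cannot simply be induced from a proper subgroup — one must instead get them by \emph{inflation} from the quotient $G/P' $ for suitable normal $P'\le P$, or by a more careful combinatorial argument showing that the relations coming from $U\subsetneq P$, together with inflations from $G/N$ for minimal normal $N\le P$, already account for all non-$p$-hypo-elementary conjugacy classes. In other words, one needs to show that ``new'' relations (those not visible in any proper subquotient) would have to be supported entirely on subgroups containing no proper subgroup of $P$ in a detectable way, and then rule these out — presumably by exploiting that $C\rtimes Q$ acting on $P$, combined with the coprimality between $p$ and $|C\rtimes Q|$, forces enough fixed-point relations among the $f_{U\rtimes V}$ that a dimension count closes the gap. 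A convenient way to organise this is to filter $P$ by a $G$-chief series $1 = P_0 \lhd P_1 \lhd \cdots \lhd P_k = P$ and induct on $k$: the quotient $G/P_1$ is a smaller $(p,q)$-Dress group with non-trivial $p$-core (or is $q$-quasi-elementary, handled directly), so by induction $\Prim_F(G/P_1)$ is trivial, and one must bridge from $G/P_1$ to $G$ by showing the extra conjugacy classes of non-$p$-hypo-elementary subgroups (those not containing $P_1$) contribute no new primitive relations — this last bridging step is the technical heart, and is presumably where Lemma~\ref{lem:pqDress}\ref{item:conjugacy} is used to control conjugacy of the complements $V$.
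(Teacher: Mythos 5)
Your overall strategy coincides with the paper's: use Lemma \ref{lem:pqDress} to parametrise conjugacy classes of subgroups of $G=P\rtimes(C\rtimes Q)$ by pairs $(U,V)$ with $U\le P$ and $V\le N_U$, observe that $U\rtimes V$ is $p$-hypo-elementary exactly when $V$ is cyclic, and manufacture imprimitive relations from the groups $N_U$ via $\iota_U=\Ind_{G/UN_U}\Inf_{UN_U/U}$ applied to $\K_F(N_U)$. Your worry about the ``top'' case is unfounded and the chief-series induction is unnecessary: for $U=P$ one has $UN_U=G$ but $U=P\neq 1$, so $\iota_P$ is inflation from the proper quotient $G/P$; for $U=1$ one has $\N_G(U)=G$ (so your claim that $P\neq 1$ forces $\N_G(U)\subsetneq G$ for $U\neq P$ is false), but $N_1$ is a $(-p)$-Hall subgroup, proper because $O_p(G)\neq 1$. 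In every case the relevant subquotient $UN_U/U$ is proper, which is all that is needed.

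The genuine gap is in your reduction of the problem. You reduce to showing that the imprimitive relations ``span a subgroup of the full rank'' of $\K_F(G)$. That is not sufficient: $\Prim_F(G)$ is the quotient of the free abelian group $\K_F(G)$ by the subgroup generated by imprimitive relations, and a full-rank subgroup only forces this quotient to be \emph{finite}, not trivial. One must prove the actual equality $\sum_U I_U=\K_F(G)$, where $I_U=\iota_U(\K_F(N_U))$. The paper does this by establishing, in addition to the rank count coming from Lemma \ref{lem:pqDress} and Corollary \ref{cor:ConlonRank}, that each $\iota_U$ is injective, that the $I_U$ are linearly independent, and --- decisively --- that $\sum_U I_U$ is \emph{saturated} in $\K_F(G)$; full rank plus saturation then gives equality. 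Your proposed substitute, a triangularity argument ``with respect to inclusion'', does not obviously close this gap, because Conlon's theorem (Corollary \ref{cor:Conlon}) only provides relations of the form $a_VV+(\text{$p$-hypo-elementary terms})$ with $a_V\neq 0$, not $a_V=\pm1$, so the resulting triangular matrix need not have unit diagonal. The independence and saturation arguments in the paper both proceed by choosing $U$ maximal with non-zero (respectively, non-divisible) contribution and comparing coefficients of subgroups containing $U$; this, together with the injectivity of $\iota_U$ (which is where Lemma \ref{lem:pqDress}, part (a), is actually used), is the technical content your proposal leaves unproven.
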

\begin{proof}
We keep the notation of Lemma \ref{lem:pqDress}. In particular, we write
$G=P\rtimes(C\rtimes Q)$, where $P$ is a non-trivial $p$-group, $Q$ is a
$q$-group, and $C$ a cyclic group	of order coprime to $pq$.

For each $U\in \cS$, identify $N_U$ with $UN_U/U$ via the quotient map,
and consider the map
$$
\iota_U=\Ind_{G/UN_U}\Inf_{UN_U/U}\colon \B(N_U)\to \B(G).
$$
Let $I_U=\iota_U(\K_F(N_U))$. Note that all $\Theta\in I_U$ are imprimitive,
since either $U$ is non-trivial, so that $UN_U/U$ is a proper quotient,
or $N_U$ is a $(-p)$-Hall subgroup of $G$, which is proper since the $p$-core
of $G$ is assumed to be non-trivial.
We will now show that $\sum_{U\in \cS}I_U = \K_F(G)$.

First, we claim that each $\iota_U$ is injective. Inflation is always an injective
map of Burnside rings, so it suffices to show that the induction map $\Ind_{G/UN_U}$ is
injective on the image of $\Inf_{UN_U/U}$. Let $H_1$ and $H_2$ be subgroups of
$UN_U$ containing $U$ that are $G$-conjugate. Since the common $p$-core is $U$,
they are then $\N_G(U)$-conjugate. Since each of their respective
$(-p)$-Hall subgroups is contained in a $(-p)$-Hall subgroup of $UN_U$, and all
$(-p)$-Hall subgroups of $UN_U$ are conjugate, we may assume, replacing $H_1$ and
$H_2$ by $UN_U$-conjugate subgroups if necessary, that $H_i=UV_i$, where
$V_i\le N_U$ for $i=1$, $2$, and where $V_1$ is $\N_G(U)$-conjugate to $V_2$.
Lemma \ref{lem:pqDress}\ref{item:conjugacy} then implies that $V_1$ and $V_2$
are also $N_U$-conjugate, so $H_1$ and $H_2$ are $UN_U$-conjugate, and
injectivity of $\iota_U$ follows.

Next, we claim that the $I_U$ for $U\in \cS$ are linearly independent.
Indeed, suppose that $\sum_{U\in \cS}\Theta_U=0$, where $\Theta_U\in I_U$.
Let $U$ be maximal with respect to inclusion subject to the property that
$\Theta_U\neq 0$. Then all terms of $\Theta_U$ contain $U$, while for all elements
$U'\neq U$ of $\cS$, all terms of $\Theta_{U'}$ are contained in $U'N_{U'}$, which does not contain $U$.
Thus, for the sum to vanish, we must have $\Theta_U=0$ -- a contradiction.

A similar argument shows that $\sum_{U\in \cS}I_U$ is saturated in $\K_F(G)$:
suppose that $\sum_{U\in \cS}\Theta_U$ is divisible by some $n\in \Z_{\geq 2}$
in $\K_F(G)$ for $\Theta_U\in I_U$, and consider $U\in\cS$ that is maximal subject to the
property that $\Theta_U$ is not divisible by $n$ in $\K_F(U)$, or, equivalently,
in $\B(U)$; then note that the above argument shows that for every subgroup $H$
of $G$ that contains $U$, the coefficient of $H$ in $\Theta_{U'}$ is divisible
by $n$ for all elements $U'\neq U$ of $\cS$, so its coefficient in $\Theta_U$ must
also be divisible by $n$, so that in fact $\Theta_U$ is divisible by $n$ in $\B(G)$ -- a contradiction.

To prove equality, it therefore only remains to compare the ranks
of $\sum_{U\in \cS}I_U$ and of $\K_F(G)$. By linear
independence and by Corollary~\ref{cor:ConlonRank}, we have
\begin{align*}
\rank\left(\sum_{U\in \cS}I_U\right) =& \sum_{U\in \cS}\rank I_U\\
=&
\sum_{U\in \cS}\#\{\text{conjugacy classes of non-cyclic subgroups of $N_U$}\},
\end{align*}
and by Lemma \ref{lem:pqDress}\ref{item:subgroups} and Corollary \ref{cor:Conlon},
this is equal to the rank of $\K_F(G)$, which completes the proof.
\end{proof}
\begin{lemma}\label{lem:SESsplit}
Let $G$ be a finite group, and let $W$ be an abelian normal subgroup
with quotient $D$. Suppose that there exists a normal subgroup $H$ of $D$
such that $\gcd(\#H,\#W)=1$ and such that no non-identity element
of $W$ is fixed under the natural conjugation action of $H$ on $W$.
Then $G\cong W\rtimes D$.
\end{lemma}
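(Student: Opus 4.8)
The plan is to use the Schur--Zassenhaus theorem together with a Frattini-type argument. Write $\pi\colon G\to D$ for the quotient map and set $\tilde H=\pi^{-1}(H)$; since $H\normal D$, the subgroup $\tilde H$ is normal in $G$, and it fits into a short exact sequence $1\to W\to\tilde H\to H\to 1$. Because $\gcd(\#W,\#H)=1$, the Schur--Zassenhaus theorem provides a complement $H_0\le\tilde H$, so that $\tilde H=W\rtimes H_0$ with $H_0\cong H$. Moreover, since $W$ is abelian (hence soluble), all complements of $W$ in $\tilde H$ are conjugate to $H_0$; in fact, writing an arbitrary element of $\tilde H$ as $wh_0$ with $w\in W$ and $h_0\in H_0$, one sees ${}^{wh_0}H_0={}^w H_0$, so all complements of $W$ in $\tilde H$ are conjugate by elements of $W$.

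Next I would run the Frattini argument to show $G=W\cdot\N_G(H_0)$. For $g\in G$, the subgroup ${}^gH_0$ is again a complement of $W$ in ${}^g\tilde H=\tilde H$, hence ${}^gH_0={}^wH_0$ for some $w\in W$ by the previous paragraph, so $w^{-1}g\in\N_G(H_0)$ and $g\in W\N_G(H_0)$.

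The crux is then to identify $W\cap\N_G(H_0)$, and this is where the fixed-point hypothesis enters. If $w\in W$ normalises $H_0$, then for every $h\in H_0$ we have ${}^wh=[w,h]\cdot h$ with $[w,h]\in W$ (as $W\normal G$), so uniqueness of the decomposition in $\tilde H=W\rtimes H_0$ forces $[w,h]=1$; thus $w$ centralises $H_0$. Conversely, every element of $W$ centralising $H_0$ lies in $\N_G(H_0)$. Hence $W\cap\N_G(H_0)=C_W(H_0)$, the subgroup of elements of $W$ fixed by the conjugation action of $H_0$. Under the isomorphism $H_0\cong H$ induced by $\pi$, this conjugation action is exactly the natural action of $H\le D$ on the abelian group $W$: for $h\in H$, lift it to the unique $h_0\in H_0$, and note that ${}^{h_0}w$ is independent of the chosen lift because $W$ is abelian. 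Therefore $C_W(H_0)$ equals the set of $H$-fixed points in $W$, which is trivial by hypothesis.

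Combining $W\cap\N_G(H_0)=\{1\}$ with $G=W\N_G(H_0)$ and $W\normal G$ yields $G=W\rtimes\N_G(H_0)$, and since $\N_G(H_0)\cap W=\{1\}$ the map $\pi$ restricts to an isomorphism $\N_G(H_0)\cong G/W=D$, giving $G\cong W\rtimes D$ as required. I do not expect a serious obstacle here; the only points that need care are invoking the conjugacy part of Schur--Zassenhaus (which requires solubility of $W$ or of $H$, supplied by $W$ being abelian) and the verification that the conjugation action of $H_0$ on $W$ coincides with the given action of $H$ on $W$.
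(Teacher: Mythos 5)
Your proof is correct, but it takes a genuinely different route from the paper. The paper argues cohomologically: coprimality gives $H^1(H,W)=H^2(H,W)=0$, the hypothesis $W^H=1$ kills $H^2(D/H,W^H)$, and the low-degree exact sequence of the Hochschild--Serre spectral sequence then forces $H^2(D,W)=0$, so the extension splits. You instead run the classical group-theoretic version of the same computation: Schur--Zassenhaus applied to the normal subgroup $\tilde H=\pi^{-1}(H)$ produces a complement $H_0$ of $W$ in $\tilde H$ (this is the vanishing of $H^2(H,W)$), the conjugacy of complements (legitimately invoked, since $W$ is abelian hence soluble) is the vanishing of $H^1(H,W)$, the Frattini argument gives $G=W\,\N_G(H_0)$, and the fixed-point hypothesis enters exactly where the paper uses $W^H=1$, to show $W\cap\N_G(H_0)=C_W(H_0)=W^H=\{1\}$, so that $\N_G(H_0)$ is the desired complement to $W$ in all of $G$. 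All the individual steps check out: the identification $W\cap\N_G(H_0)=C_W(H_0)$ via uniqueness of the $W\rtimes H_0$ decomposition is sound, and the observation that the conjugation action of $H_0$ agrees with the given action of $H$ (lifts differing by elements of the abelian group $W$ act identically) is the right point to verify. What your approach buys is a completely elementary, self-contained argument with an explicit complement; what the paper's buys is brevity, since the spectral sequence packages the Frattini-type descent from $\tilde H$ to $G$ into a single exact sequence.
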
 
\begin{proof}
We may view $W$ as a module under $D$.
Since $H$ and $W$ have coprime orders, the cohomology group
$H^1(H,W)$ vanishes, so the Hochschild--Serre spectral sequence
gives an exact sequence
$$
H^2(D/H,W^H)\rightarrow H^2(D,W)\rightarrow H^2(H,W).
$$
The last term in this sequence also vanishes by the coprimality assumption,
while the first term vanishes, since $W^H$ is assumed to be trivial.
Hence $H^2(D,W)=0$, and therefore the extension $G$ of $D$ by $W$ splits.
\end{proof}

\begin{theorem}\label{thm:onlyif}
Let $G$ be a finite soluble group, and suppose that $\Prim_F(G)$ is
non-trivial. Then $G$ is one of the following:
\begin{enumerate}[leftmargin=*, label={\upshape(\roman*)}]
\item\label{item:quasielem} a quasi-elementary group $C\rtimes Q$ of order coprime
to $p$, where $C$ is cyclic and $Q$ is a $q$-group for some prime number $q$, and
either $C$ is not of prime order, or $Q$ does not act faithfully on $C$,
\item\label{item:WD} a semidirect product $G=W\rtimes D$, where $W=(C_l)^d$
for a prime number $l\neq p$ and an integer $d\ge 1$, and $D$ is a $(p,q)$-Dress
group for some prime number $q$, acting faithfully and irreducibly on $W$,
\item\label{item:2dim} $G=(C_l \rtimes D_1)\times (C_l \rtimes D_2)$, where
$l\neq p$ is a prime number, $D_1$, $D_2$ are cyclic $q$-groups for a prime number
$q$ that act faithfully on $C_l\times C_l$,
\item\label{item:ppDress} a $(p,p)$-Dress group.
\end{enumerate}
\end{theorem}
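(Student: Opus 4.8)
The plan is to combine Corollary \ref{cor:SES} with Theorem \ref{thm:pqDress} and the structure theory of soluble groups. By Corollary \ref{cor:SES}, since $G$ is soluble, the minimal normal subgroup $W$ appearing in the chief series must be of the form $(C_l)^d$ for some prime $l$ and some $d\ge 1$ (taking $S=C_l$ in that corollary), and $D:=G/W$ is a $(p,q)$-Dress group for some prime number $q$. If $l=p$, then $G$ has non-trivial $p$-core, so if in addition $q\neq p$ then $G$ itself is a $(p,q)$-Dress group with non-trivial $p$-core, whence $\Prim_F(G)$ is trivial by Theorem \ref{thm:pqDress}, a contradiction; so either $l\neq p$, or $G$ is a $(p,p)$-Dress group, which is case \ref{item:ppDress}. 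So from now on I assume $l\neq p$ and that $G$ is not a $(p,q)$-Dress group for any prime $q$; in particular $D$ does not act trivially on $W$, since otherwise $W$ would be central and $G/O^l(W\cap Z(G))$-type arguments would exhibit $W$ inside a larger normal abelian subgroup, contradicting minimality — more directly, if $D$ acted trivially on $W$ then $W\le Z(G)$ and $G$ would be nilpotent-by-(Dress), and one checks $G$ is then itself a $(p,q)$-Dress group. (This last point needs to be done carefully.)

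Next I would pin down the action of $D$ on $W$. First, $D$ must act faithfully on $W$: the kernel $K$ of $D\to\Aut(W)$ pulls back to a normal subgroup of $G$; if it were non-trivial, pick a minimal normal subgroup of $G$ inside its preimage, intersect with $W$ — if the intersection is trivial one gets a second minimal normal subgroup centralising $W$, and a standard argument (as in the proof of Corollary \ref{cor:SES}, using that $G/(\text{that subgroup})$ would fail the Dress conclusion of Theorem \ref{thm:quotients}, or using $C_G(W)\cap$ that subgroup) forces a contradiction. Then I would use Theorem \ref{thm:quotients} itself on $G$: since $\Prim_F(G)$ is non-trivial and $G$ is not Dress, every proper quotient of $G$ is a $(p,q)$-Dress group. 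Quotients of $G$ correspond to normal subgroups $N$; taking $N$ a minimal normal subgroup contained in $W$ (these are the $D$-irreducible submodules of $W$), the quotient $G/N\cong (W/N)\rtimes D$ must be $(p,q)$-Dress. Iterating, I would argue that $W$ as an $\F_l[D]$-module is either irreducible — giving case \ref{item:WD} once I record that $D$ being $(p,q)$-Dress is exactly the hypothesis there — or decomposes in a very restricted way. The key point is that if $W$ is a sum of two or more irreducible $D$-submodules $W_1,\dots,W_r$, then each $G/W_i$ is $(p,q)$-Dress and yet $G$ is not; analyzing how $D$ distributes over the $W_i$ (it must act faithfully, but its image in each $\Aut(W_i)$ may be proper) together with the constraint that all quotients are $(p,q)$-Dress for a \emph{single} $q$ should force $r=2$, $d=\dim W_1=\dim W_2=1$, and $D=D_1\times D_2$ with $D_i$ acting on $W_i=C_l$; here I would use that $\Aut(C_l)$ is cyclic, so the image of $D$ in it is cyclic, hence (being a quotient of a $(p,q)$-Dress group, so $(p,q)$-Dress, so a $q$-group when $l\neq p$ and it embeds in $\Aut(C_l)$ which has order prime to... ) one gets $D_i$ cyclic of $q$-power order, yielding case \ref{item:2dim}. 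The cases \ref{item:quasielem} arise precisely when $d=1$: then $G=C_l\rtimes D$ with $D$ acting faithfully on $C_l$ via a cyclic quotient, and $G$ is itself quasi-elementary; the two sub-conditions ($C$ not prime, or $Q$ not faithful) come from ruling out the genuinely minimal cases where $\Prim_F$ would vanish, which I would cross-check against Theorem \ref{thm:quotients}\ref{item:allquophypo}–\ref{item:allquodress}.

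I expect the main obstacle to be the combinatorial case analysis when $W$ is \emph{reducible} as a $D$-module: showing that the only possibility consistent with "every proper quotient is $(p,q)$-Dress for one fixed $q$, but $G$ itself is not" is the very rigid product form in \ref{item:2dim}. This requires carefully tracking, for each $D$-submodule $N\le W$, the structure of $(W/N)\rtimes D$ and its Sylow/Hall subgroups, and exploiting that a group being $(p,q)$-Dress for two different primes $q$ simultaneously is extremely restrictive (essentially forcing it to be $p$-hypo-elementary). A secondary technical point is the reduction at the start — verifying that $G$ is a split extension $W\rtimes D$; when $\gcd(\#D,l)=1$ this is immediate, and when $l\mid\#D$ one invokes Lemma \ref{lem:SESsplit} with $H$ a suitable normal $q'$-subgroup of $D$ acting fixed-point-freely on $W$, but checking that such an $H$ exists in all the relevant configurations will need the faithfulness of the $D$-action established above. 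Finally, one must separately confirm that in case \ref{item:WD} there is no hidden constraint beyond "$D$ is a $(p,q)$-Dress group acting faithfully irreducibly" — i.e. that this case is not vacuous and that the two-factor degenerate subcase has genuinely been separated off into \ref{item:2dim}; this amounts to noting that an irreducible faithful module cannot further decompose, so no overlap occurs.
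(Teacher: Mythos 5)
Your overall skeleton (Corollary \ref{cor:SES} to get $1\to W=(C_l)^d\to G\to D\to 1$, Theorem \ref{thm:pqDress} to dispose of $l=p$, then Theorem \ref{thm:quotients} applied to the quotients $G/N$ for $D$-submodules $N\le W$) is the same as the paper's, and your analysis of the reducible split case is in the right direction. But there are two genuine gaps. The first is the non-split case. You relegate ``verifying that $G$ is a split extension'' to a secondary technical point, hoping to invoke Lemma \ref{lem:SESsplit} with a normal subgroup of $D$ acting fixed-point-freely on $W$. Such a subgroup need not exist: the typical obstruction is a normal subgroup $C$ (or $P=O_p$) of $D$ of order coprime to $l$ with $\{1\}\neq W^C\neq W$, and in that situation Lemma \ref{lem:SESsplit} simply does not apply. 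The paper's proof does \emph{not} show the extension always splits; instead it devotes a separate case (Case 3) to the non-split situation, where the conclusion is reached by different means --- passing to $G/W^C$ or $G/W^P$, and using Frattini-subgroup arguments on the $l$-Sylow subgroup of $G$ to force it to be cyclic (whence $G$ is quasi-elementary) or to derive a contradiction. Without an argument of this kind your proof does not cover, for instance, groups where $l\mid\#D$ and $H^2(D,W)\neq 0$.

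The second gap is case \ref{item:quasielem}. Your plan is to ``cross-check against Theorem \ref{thm:quotients}\ref{item:allquophypo}--\ref{item:allquodress}'' to rule out the minimal quasi-elementary groups ($C$ of prime order with $Q$ acting faithfully). But a quasi-elementary group of order coprime to $p$ \emph{is} a $(p,q)$-Dress group, so Theorem \ref{thm:quotients} is inapplicable to it by hypothesis and gives you no information about $\Prim_F$. The paper's mechanism here is entirely different: Theorem \ref{thm:pqDress} forces the $p$-core to be trivial, hence $p\nmid\#G$, hence every $p$-hypo-elementary subgroup of $G$ is cyclic, so Artin's and Conlon's induction theorems give $\Prim_F(G)=\Prim_{\Q}(G)$, and one then quotes the characteristic-zero classification of \cite{bra1} to obtain exactly the conditions in \ref{item:quasielem}. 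You also misattribute where case \ref{item:quasielem} comes from: it is not ``precisely $d=1$'' (the $d=1$ faithful irreducible situation belongs to case \ref{item:WD}), but rather the residual case of $(p,q)$-Dress groups with $q\neq p$. A smaller issue of the same flavour: your faithfulness argument for the $D$-action claims a contradiction when the kernel $N$ is non-trivial, whereas what actually happens (and what the paper proves in Cases 1(a), 1(b), 2(a)) is that $N\neq\{1\}$ forces $G$ to be a $(p,l)$- or $(p,q)$-Dress group, which is then absorbed into the Dress cases rather than contradicted outright.
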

\begin{proof}
We begin by observing that if $G$ is a $(p,q)$-Dress group for some prime number
$q$, then the conclusion of the theorem holds. Indeed, if $G$ is a $(p,p)$-Dress
group, then this is clear. If, on the other hand, $G$ is a $(p,q)$-Dress group
for a prime number $q\neq p$, then
it follows from Theorem \ref{thm:pqDress} that $G$ must have trivial $p$-core,
so the order of $G$ is coprime to $p$,
which implies that all $p$-hypo-elementary subquotients of $G$ are cyclic. By
Theorems \ref{thm:Artin} and \ref{thm:Conlon}, we then have $\Prim_F(G)=\Prim_{\Q}(G)$,
and it follows from \cite[Theorem A, case (4)]{bra1} that $G$ satisfies
the conditions of part \ref{item:quasielem} of the theorem. In particular, if
$G$ is quasi-elementary, then the conclusion of the theorem holds.

We will repeatedly use this observation without further mention.

By Corollary \ref{cor:SES}, $G$ is an extension of the form
\begin{eqnarray}\label{eq:ses}
1\rightarrow W=(C_l)^d \rightarrow G \rightarrow D \rightarrow 1,
\end{eqnarray}
where $l$ is a prime number, $d\ge 0$ is an integer, and $D$ is a $(p,q)$-Dress
group for some prime number $q$.
If $d=0$ or $l=p$, then $G$ is a $(p,q)$-Dress group, and we are done. For the rest of
the proof, assume that $d\ge 1$ and $l\neq p$. We now consider several cases.

\textbf{Case 1: $l\nmid \#D$.} By the Schur-Zassenhaus theorem
\cite[Theorem 3.8]{Isaacs}, the short
exact sequence (\ref{eq:ses}) splits, so we have $G\cong W\rtimes D$,
and we may view $D$ as a subgroup of $G$.
Let $N\normal G$ be the centraliser of $W$ in $D$.

\textbf{Case 1(a): $N\neq \{1\}$ and $D$ is $p$-hypo-elementary.}
The subgroup $WN/N$ is normal in $G/N$. By Theorem \ref{thm:quotients},
$G/N$ is a $(p,q)$-Dress group for some prime number $q$. 
It follows that $D/N$ is also normal in $G/N$,
so $G/N= WN/N\times D/N$, so the commutator $[W,D]$
is contained in $N\le D$. But also, since $W$ is normal in $G$, this commutator
is contained in $W$, so it is trivial. It follows that $W$ commutes with $D$,
and $G$ is a $(p,l)$-Dress group.

\textbf{Case 1(b): $N\neq \{1\}$ and $D$ is not $p$-hypo-elementary.}
By Theorem \ref{thm:quotients}, $G/N$ is a $(p,q)$-Dress group. Since
$l\nmid pq$, this implies that $W$ must be cyclic, and, by
the same argument as in case 1(a), it must commute with $O^q(D)$.
It follows that $G$ is a $(p,q)$-Dress group.

\textbf{Case 1(c): $N=\{1\}$ and $D$ acts reducibly on $W$.}
Let $U$ be a proper non-trivial subgroup of $W$ that is normal in $G$. Since
$l\nmid \#D$, the $\F_l[D]$-module $W$ is semisimple, so there exists a subgroup
$V$ of $W$ that is normal in $G$ and such that $UV=W$ and $U\cap V=\{1\}$.
By Theorem \ref{thm:quotients}, both $G/U$ and $G/V$ are $(p,q)$-Dress groups.
Since $l\nmid pq$, this implies that $V\cong W/U\cong C_l$ and $U\cong W/V\cong C_l$.
Thus, $G\cong (U\rtimes D_1)\times (V\rtimes D_2)$, where $D_1$ acts faithfully
on $U$, and $D_2$ acts faithfully on $V$, and in particular both are cyclic.
It follows that $O_p(G/U)$ is of the form $NU/U$ for a $p$-subgroup $N$ of $D_1$.
For $G/U$ to be a $(p,q)$-Dress group, the $(-q)$-Hall subgroup of $G/UN$ must be
cyclic, which forces $D_2$ to be a $q$-group, and similarly for $D_1$.
This is case \ref{item:2dim} of the theorem.

\textbf{Case 1(d): $N=\{1\}$ and $D$ acts irreducibly on $W$.}
This is case \ref{item:WD} of the theorem.

\textbf{Case 2: $l\mid \#D$ and $G=W\rtimes D$.}
In this case, $N=\ker(D\rar \Aut W)$ is again a normal subgroup of $G$.

\textbf{Case 2(a): $N\neq \{1\}$.}
By Theorem \ref{thm:quotients}, the quotient $G/N$ is a $(p,q)$-Dress group.
Since $D/N$ acts faithfully on $W$, no non-trivial subgroup of $D/N$ can be
normal in $G/N$. In particular, $O_p(G/N)$ must be trivial, so $N$ contains
$O_p(D)$, and $G/N$ is in fact quasi-elementary, $G/N\cong C\rtimes Q$, where
$C$ is cyclic and $Q$ is a $q$-group. By the same argument, $C$ is an $l$-group.
Now, if $q=l$, then $G/N$ is an $l$-group, and $G$ is an extension of an $l$-group
by the $(p,l)$-Dress group $N$, hence is itself a $(p,l)$-Dress group.
If $q\neq l$, then $W$ must be cyclic, and must commute with $O_p(D)$, so $O_p(D)$
is normal in $G$, and $G/O_p(D)$ is $q$-quasi-elementary, whence $G$ is a
$(p,q)$-Dress group.

\textbf{Case 2(b): $N=\{1\}$ and $D$ acts reducibly on $W$.}
Let $U\le W$ be a non-zero
proper $\F_l[D]$-subrepresentation of $W$.
By Theorem \ref{thm:quotients}, the quotient $G/U$ is a $(p,q)$-Dress group.

\textbf{Case 2(b)(i): $l\neq q$.}
Then the $l$-Sylow subgroups of $G/U$ must be cyclic.
In particular, any $l$-Sylow subgroup $C$ of $D$, which is non-trivial by assumption,
acts trivially by conjugation on $W/U$. Since $G$ is assumed to be a semi-direct
product, the $l$-Sylow subgroup of $G/U$ is a direct product of $W/U$ and $C$,
and therefore cannot be cyclic -- a contradiction.

\textbf{Case 2(b)(ii): $l=q$.}
Either $G/U$ is an $l$-group, in which case so is
$G$, and we are in case \ref{item:quasielem} of the theorem; or there exists
a subgroup $C\le D$ of order coprime to $l$ such that $CU/U$ is normal in $G/U$,
and in particular $C$ is normal in $D$.
The $\F_l[C]$-module $W$ is then semisimple,
so there exists a subgroup $V\le W$ that is normalised by $C$, and
such that $VU=W$ and $V\cap U = \{1\}$. Since $CU/U$ is normal in $G/U$, and
since $W/U$ is also normal in $G/U$, $CU/U$ and $W/U$ commute, so we
have $[C,V]\le U$. But since $V$ is normalised by $C$, we also have $[C,V]\le V$,
so $C$ in fact centralises $V$. Thus, $V$ is contained in $W^C$, which is
a normal subgroup of $G$. If $W^C=W$, then $C\le N$, contradicting
the assumption that $N=\{1\}$. So $W^C$ is a proper non-trivial subgroup
of $W$.
Since $l\nmid \#C$, there exists a non-trivial
subgroup $U'\le W$ such that $W=U'W^C$ and $U'\cap W^C = \{1\}$. In particular,
$(U')^C=\{1\}$. By Theorem \ref{thm:quotients}, the quotient $G/W^C$ is
$(p,l)$-Dress, so $CW^C/W^C$ is contained in the normal subgroup
$O^l(G/W^C)=O^l(D)W^C/W^C$. It follows that $[C,U']\le W^CO^l(D)$. But since
$U'$ is normalised by $C$, we also have $[C,U']\le U'$. Since
$U'\cap W^CO^l(D)$ is trivial, we deduce that $C$ centralises $U'$ -- a contradiction.

\textbf{Case 2(c): $N=\{1\}$, and $D$ acts irreducibly on $W$.}
This is case \ref{item:WD} of the theorem.

\textbf{Case 3: $l\mid \#D$ and the extension of $D$ by $W$ is not split.}
By the Schur-Zassenhaus Theorem, the preimage of $O_p(D)$ under the quotient map $G\to G/W$
is a split extension by $W$. Let $P$ be a complement
to $W$ in this preimage. In other words, $P$ is a subgroup of $G$
that maps isomorphically onto $O_p(D)$ under the quotient map $G\rar G/W$.

\textbf{Case 3(a): $P=\{1\}$ and $l\neq q$.}
Then the $l$-Sylow subgroup $S$ of $G$ is normal in $G$. If it is elementary
abelian, then the extension of $D$ by $S$ splits by the Schur-Zassenhaus theorem
\cite[Theorem 3.8]{Isaacs},
and we are in Case 2 of the proof. Otherwise, the Frattini subgroup
$\Phi=[S,S]S^l$ of $S$ is non-trivial, and since it is a characteristic subgroup
of $S$, it is normal in $G$. By Theorem \ref{thm:quotients}, the quotient
$G/\Phi$ is a $(p,q)$-Dress group, so the $l$-Sylow subgroup of $G/\Phi$ is
cyclic. But since $\Phi$ consists of ``non-generators'' of $S$, this implies
that $S$ itself is cyclic, so $G$ is $q$-quasi-elementary.

\textbf{Case 3(b): $P=\{1\}$ and $p\neq l=q$.}
Let $C$ be a $(-l)$-Hall subgroup of $G$. The assumptions on $G$ imply
that $C$ is cyclic, and that $D$ is of the form $C\rtimes Q$, where $Q$
is a $q$-group. If $W^C=W$, then $C$ is a normal subgroup of $G$,
and $G$ is $q$-quasi-elementary. If $W^C=\{1\}$, then Lemma \ref{lem:SESsplit}
implies that the extension of $D$ by $W$ splits -- a contradiction.
So $W^C$ is a non-trivial proper subgroup of $W$, which is normal in $G$, 
since $C$ is normal in $D$. Since the order of $C$ is coprime to $l$,
the $\F_l[C]$-representation $W$ is semisimple, so there exists a subgroup
$U$ of $W$ that is normalised by $C$, and such that $UW^C=W$, $U\cap W^C=\{1\}$.
By Theorem \ref{thm:quotients}, the quotient $G/W^C$ is a $(p,q)$-Dress group.
But it has trivial $p$-Sylow subgroup, so it is $q$-quasi-elementary,
and $CW^C/W^C$ is normal in $G/W^C$. Thus $[C,U]\le W^C$. But also,
$U$ is a $C$-subrepresentation, so $[C,U]\le U$, whence we deduce that $C$
centralises $U$, so that $W^C=W$ -- a contradiction.

\textbf{Case 3(c): $P\neq \{1\}$ and $W^P=W$.}
In this case, $P$ is a non-trivial normal $p$-subgroup of $G$. By Theorem \ref{thm:quotients},
the quotient $G/P$ is $(p,q)$-Dress, therefore so is $G$ itself.

\textbf{Case 3(d): $P\neq \{1\}$ and $W^P\neq W$.}
By Lemma \ref{lem:SESsplit}, the subgroup $W^P$ is non-trivial.
Moreover, since $P$ is a normal subgroup of $D$, $W^P$ is a normal subgroup of $G$.
The $\F_l$-representation $W$ of $P$ is semisimple, so there exists a subgroup
$U\le W$ that is normalised by $P$ and such that $UW^P=W$, $U\cap W^P=\{1\}$.
By Theorem \ref{thm:quotients}, the quotient $G/W^P$ is a $(p,q)$-Dress group.
We claim that $O_p(G/W^P)$ must be trivial. Indeed, $O_p(G/W^P)$ is necessarily
of the form $NW^P/W^P$, where $N$ is a subgroup of $P$ that is normal in $D$.
But then we have $[N,U]\le W^P$, and also $[N,U]\le U$, since $U$ is a
$P$-subrepresentation of $W$. Thus $N$ centralises $U$, whence $W^N=W$.
By Lemma \ref{lem:SESsplit}, the assumption that the extension of $D$ by $W$ is
non-split forces $N=\{1\}$.

\textbf{Case 3(d)(i): $l\neq q$.}
Then the $l$-Sylow subgroup of $G/W^P$ must be cyclic and normal in $G/W^P$.
Since $W^P\neq W$, and since we assume that $l\mid \#D$, this implies that the $l$-Sylow
subgroup $S$ of $G$ is normal in $G$ and has an element of order strictly greater than $l$.
Thus, the Frattini subgroup $\Phi=[S,S]S^l$ of $S$ is non-trivial, and since it
is a characteristic subgroup of $S$, it is normal in $G$. By Theorem
\ref{thm:quotients}, the quotient $G/\Phi$ is a $(p,q)$-Dress group, so the $l$-Sylow
subgroup of $G/\Phi$ is cyclic. But that implies that the $l$-Sylow subgroup
of $G$ is also cyclic, and therefore $W\cong C_l$,
contradicting the assumptions that $\{1\}\neq W^P\neq W$.

\textbf{Case 3(d)(ii): $l=q$.}
Then $p\neq q$, so the $p$-Sylow subgroup of the $(p,q)$-Dress group
$G/W^P$ must be normal in $G/W^P$, contradicting the observation that $O_p(G/W^P)$
is trivial.

This covers all possible cases, and concludes the proof.
\end{proof}

\section{Explicit relations}\label{sec:relations}
In the present section, we prove Theorem \ref{thm:main}.
Proposition \ref{prop:1dimrel} below proves parts
\ref{item:generatorsPrim}\ref{item:main_Serre}\ref{item:mn} and
\ref{item:generatorsPrim}\ref{item:main_Serre}\ref{item:q^k}
of the theorem. The main remaining step is to prove that the element appearing
in part
\ref{item:generatorsPrim}\ref{item:main_Serre}\ref{item:d>1} of the Theorem is indeed an element of $\K_F(G)$, and that
is achieved in Theorem \ref{thm:highdimrel}. Most of the section is devoted to
the proof of Theorem \ref{thm:highdimrel}. With all the ingredients in place,
the proof of Theorem \ref{thm:main} is assembled from them at the end of the section.

\begin{proposition}\label{prop:1dimrel}
Let $l\neq p$ be a prime number, and let
$G=C_l \rtimes C$, where $C$ is a non-trivial cyclic group,
acting faithfully on $C_l$.
Then $\Prim G\cong \Z$, and is generated by the following relation $\Theta$:
\begin{enumerate}[leftmargin=*, label={\upshape(\roman*)}]
\item if $C\cong C_{mn}$, where $m$, $n>1$ are coprime integers,
then
$$
\Theta=G-C+\alpha(C_n-C_l\rtimes C_n)+\beta(C_m-C_l\rtimes C_m),
$$
where $\alpha$, $\beta$ are any integers satisfying $\alpha m+\beta n = 1$;
\item if $C\cong C_{q^{k+1}}$, where $q$ is a prime number, and $k\in\Z_{\ge 0}$, then
$$
\Theta=C_{q^k}-qC-C_l\rtimes C_{q^k}+qG.
$$
\end{enumerate}
\end{proposition}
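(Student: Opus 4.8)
The plan is to treat the two cases of the proposition in turn, in each case first pinning down the isomorphism type of $\Prim_F(G)$ and then checking that $\Theta$ is a relation generating it. I would begin with the structural facts that both cases use. Since $C$ acts faithfully on $C_l$, it is cyclic of order dividing $l-1$; in particular $\gcd(|C|,l)=1$, and (using $l\neq p$) every subgroup of $G$ of the shape $C_l\rtimes C'$ has trivial $p$-core. A routine analysis of subgroups then shows that a full set of conjugacy class representatives of subgroups of $G$ consists of the $C_l\rtimes C'$ and the $C'$, as $C'$ runs over subgroups of $C$; among these the $p$-hypo-elementary ones are exactly $C_l$ together with the cyclic $C'\le C$, and the non-$p$-hypo-elementary ones are the $C_l\rtimes C'$ with $C'\neq\{1\}$. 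By Corollary~\ref{cor:ConlonRank} this determines $\rank\K_F(G)$. I would also record that every non-trivial normal subgroup of $G$ contains $C_l$, so all proper quotients of $G$ are cyclic, hence $p$-hypo-elementary.

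\emph{Case (i).} Here $|C|=mn$ is not a prime power, so $G$ is not $q$-quasi-elementary for any prime $q$, and as $O_p(G)=\{1\}$ it is not a $(p,q)$-Dress group for any $q$; since all proper quotients are $p$-hypo-elementary, Theorem~\ref{thm:quotients}\ref{item:allquophypo} gives $\Prim_F(G)\cong\Z$, generated by any relation in which $G$ has coefficient $1$. The stated $\Theta$ has $G$-coefficient $1$, so it remains to verify $\Theta\in\K_F(G)$, which by Conlon's Induction Theorem means checking $f_U(\Theta)=0$ for $U$ running over $C_l$ and the cyclic subgroups $C_d$ of $C$. These are finite fixed-point counts: for $1\neq C_d\le C$ one uses normality of $C_l$, $C_l\rtimes C_m$ and $C_l\rtimes C_n$ in $G$ to see that the contributions of $C_m$ and $C_l\rtimes C_m$ (resp.\ of $C_n$ and $C_l\rtimes C_n$) cancel and those of $G$ and $C$ cancel, while for $U=\{1\}$ and $U=C_l$ the identity $\alpha m+\beta n=1$ is precisely what makes the count vanish. (Equivalently, here the $p$-hypo-elementary subgroups of $G$ are exactly the cyclic ones, so $\K_F(G)=\K_{\Q}(G)$ by Theorems~\ref{thm:Artin} and~\ref{thm:Conlon}, and the verification may be done in characteristic $0$.)

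\emph{Case (ii).} Now $G=C_l\rtimes C_{q^{k+1}}$ is itself $q$-quasi-elementary, so Theorem~\ref{thm:quotients} does not apply and I would argue directly. The only non-cyclic proper subquotients of $G$ are the subgroups $C_l\rtimes C_{q^j}$ with $1\le j\le k$, all contained in $H:=C_l\rtimes C_{q^k}$, which is normal of index $q$; hence the subgroup of imprimitive relations is $I=\Ind_{G/H}\K_F(H)$. Since $H\trianglelefteq G$ and one checks that no $H$-conjugacy class of subgroups of $H$ breaks up under $G$-conjugation, $\Ind_{G/H}$ is injective, so $\rank I=\rank\K_F(H)$; moreover $I$ is saturated in $\K_F(G)$ by the coefficient argument used in the proof of Theorem~\ref{thm:pqDress}, because an element of $I$ has zero coefficient at $G$ and at $C_{q^{k+1}}$, the only subgroups of $G$ not conjugate into $H$, and $\K_F(H)$ is saturated in $\B(H)$. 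Comparing the two rank computations of Corollary~\ref{cor:ConlonRank} gives $\rank I=\rank\K_F(G)-1$, so $\Prim_F(G)=\K_F(G)/I$ is free of rank $1$. To identify its generator, observe that since $C_l$ is $p$-hypo-elementary, every $\Psi=\sum_K n_KK\in\K_F(G)$ satisfies $f_{C_l}(\Psi)=0$; as $f_{C_l}$ of $C_l\rtimes C_{q^j}$ equals $q^{\,k+1-j}$ and vanishes on all other subgroups, this reads $\sum_{j=0}^{k+1}n_{C_l\rtimes C_{q^j}}\,q^{\,k+1-j}=0$, forcing $q\mid n_G$. Thus the $G$-coefficient defines a homomorphism $\K_F(G)\to q\Z$ whose kernel has rank $\rank\K_F(G)-1$ and contains the saturated rank-$(\rank\K_F(G)-1)$ subgroup $I$, hence equals $I$, so $\Prim_F(G)\xrightarrow{\,\sim\,}q\Z$. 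Finally the Conlon computation showing $\Theta\in\K_F(G)$ (again a fixed-point count, which goes through verbatim whether or not $q=p$) gives a relation with $G$-coefficient $q$, which therefore maps to a generator of $q\Z$ and generates $\Prim_F(G)$.

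The only real work is the bookkeeping in the fixed-point computations, in particular keeping track of $G$-conjugacy of subgroups when evaluating $f_U([G/K])$. Once one knows that the subgroups $C_l\rtimes C_{q^j}$ are normal and that $\N_G(C_{q^j})=C_{q^{k+1}}$ for $j\ge 1$ (because $G$ is a Frobenius group with kernel $C_l$), every count reduces to small arithmetic, and I expect no conceptual obstacle beyond that.
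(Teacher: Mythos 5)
Your argument is correct, but it takes a genuinely different route from the paper's. The paper's proof is two lines: since $C$ acts faithfully on $C_l$ and $l\ne p$, every non-cyclic subquotient of $G$ has trivial $p$-core, so the $p$-hypo-elementary subquotients are exactly the cyclic ones; Theorems \ref{thm:Artin} and \ref{thm:Conlon} then give $\K_F(G)=\K_\Q(G)$ and $\Prim_F(G)=\Prim_\Q(G)$, and the whole statement is quoted from the characteristic-zero classification \cite[Theorem A, case 3a]{bra1}. You notice this reduction only in passing (your parenthetical in case (i)) and instead reprove everything from scratch: in case (i) you get $\Prim_F(G)\cong\Z$ from Theorem \ref{thm:quotients}\ref{item:allquophypo} after checking that $G$ is not a $(p,q')$-Dress group and that all proper quotients are cyclic, and verify $\Theta\in\K_F(G)$ by a Conlon fixed-point count; in case (ii), where Theorem \ref{thm:quotients} is unavailable because $G$ is $q$-quasi-elementary, you identify the imprimitive subgroup as $\Ind_{G/H}\K_F(H)$ for $H=C_l\rtimes C_{q^k}$, run a rank-and-saturation comparison in the style of the proof of Theorem \ref{thm:pqDress}, extract the divisibility $q\mid n_G$ from $f_{C_l}$, and again check $\Theta$ by fixed points. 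I verified the individual counts ($f_{C_l}(C_l\rtimes C_{q^j})=q^{k+1-j}$, the cancellations for $C_d\le C$ via normality of the $C_l\rtimes C'$, and the use of $\alpha m+\beta n=1$ at $K=\{1\}$ and $K=C_l$), and the logic identifying $\ker$ of the $G$-coefficient map with $I$ is sound. What the two approaches buy: the paper's is essentially free but outsources all content to \cite{bra1}; yours is longer but self-contained, in effect reproving the relevant case of the characteristic-zero classification, and it makes explicit why the argument is uniform in whether or not $q=p$ (the non-cyclic subgroups have trivial $p$-core, so the $p$-hypo-elementary test set is always the cyclic subgroups).
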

\begin{proof}
The hypotheses on $G$ imply that all non-cyclic subquotients of $G$ have trivial
$p$-core, so a subquotient of $G$ is cyclic if and only if it is $p$-hypo-elementary.
It therefore follows from Theorems \ref{thm:Artin} and \ref{thm:Conlon}, that
$\B_F(G)=\B_\Q(G)$, and $\Prim_F(G)=\Prim_\Q(G)$, and the result follows
from \cite[Theorem A, case 3a]{bra1}.
\end{proof}

\begin{theorem}\label{thm:highdimrel}
Let $l\neq p$ and $q$ be prime numbers, let $G=W\rtimes Q$, where
$W=(C_l)^d$ with $d\in \Z_{\ge 2}$, and $Q$ is a $(p,q)$-Dress
group acting faithfully on $W$. Suppose that either
$Q$ acts irreducibly on $W$, or
$d=2$, and $G=(C_l\rtimes P_1)\times (C_l\rtimes P_2)$, where the
$P_i$ are $q$-groups acting faithfully on the respective factor of $W$.
Then the element
$$
\Theta = G-Q+\sum_{U\le_G W\atop (W:U)=l} (U\N_Q(U)-W\N_Q(U)),
$$
of $\B(G)$ is in $\K_F(G)$, where the sum runs over a full set of $G$-conjugacy
class representatives of index $l$ subgroups of $W$.
\end{theorem}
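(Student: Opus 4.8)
The plan is to apply Conlon's Induction Theorem, Theorem~\ref{thm:Conlon}: since $\K_F(G)$ is the kernel of $\prod_H f_H$, the product taken over a full set of conjugacy class representatives of $p$-hypo-elementary subgroups $H$ of $G$, it suffices to prove that $f_H(\Theta)=0$ for every such $H$. I will use throughout that $f_H(G/K)$ equals the number of cosets $gK\in G/K$ with $g^{-1}Hg\le K$, i.e.\ the number of $G$-conjugates of $K$ containing $H$, counted with multiplicity; in particular $f_H(G/G)=1$.

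Fix a $p$-hypo-elementary $H$ and set $V=H\cap W$ and $\bar H=HW/W\le Q$. Two reductions make the problem tractable. First, because $l\ne p$, the normal $l$-subgroup $V$ of $H$ embeds into the cyclic quotient $H/O_p(H)$ and is therefore cyclic; as $V\le W\cong(C_l)^d$, this forces $V=\{1\}$ or $V$ a line of $W$, with $\bar H$ stabilising that line in the second case. Second, $W\N_Q(U)$ is the preimage in $G$ of $\N_Q(U)\le G/W=Q$, so $G/(W\N_Q(U))$ is inflated from the $Q$-set $Q/\N_Q(U)$; hence $\sum_U f_H(G/(W\N_Q(U)))$ equals the number of index-$l$ subgroups of $W$ normalised by $\bar H$. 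For the two remaining kinds of term I would compute directly: writing $g=wy$ with $w\in W$, $y\in Q$, and each $h\in H$ as $h=a(h)t(h)$ with $a(h)\in W$, $t(h)\in Q$ (so $t\colon H\to Q$ is the composite of $H\hookrightarrow G$ with $G\to G/W=Q$, with image $\bar H$ and kernel $V$, and $a$ is a $1$-cocycle of $H$ with values in $W$), one finds that $f_H(G/(U\N_Q(U)))$ is the sum, over the $Q$-conjugates $P$ of $U$ normalised by $\bar H$, of $|(W/P)^{\bar H}|$ if the class of $a$ in $H^1(H,W/P)$ vanishes and of $0$ otherwise, and likewise $f_H(G/Q)=|W^{\bar H}|$ or $0$ according as $[a]$ vanishes in $H^1(H,W)$ or not (so $f_H(G/Q)=0$ whenever $V\ne\{1\}$). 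Assembling these identities, $f_H(\Theta)=0$ becomes the single equation
$$\sum_{\substack{P\le W,\ (W:P)=l\\ \bar H\text{-stable}}}\Bigl(|(W/P)^{\bar H}|\cdot[a\mapsto 0\ \text{in}\ H^1(H,W/P)]-1\Bigr)\;=\;|W^{\bar H}|\cdot[a\mapsto 0\ \text{in}\ H^1(H,W)]-1.$$

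I would then verify this by splitting into cases. The $\bar H$-stable hyperplanes on which $\bar H$ acts trivially on the quotient are precisely those containing the submodule generated by all $(h-1)w$, and there are $(l^{f}-1)/(l-1)$ of them, where $f=\dim W_{\bar H}$ is the dimension of the coinvariants; each contributes $l$ to the left-hand sum, the remaining $\bar H$-stable hyperplanes contributing $1$. The key auxiliary fact, which I would isolate as a lemma, is that for a $p$-hypo-elementary group $\bar H$ acting on an $\F_l$-module with $l\ne p$ one has $\dim W^{\bar H}=\dim W_{\bar H}$ (reduce to the $l'$-part of $\bar H$, whose group algebra over $\F_l$ is semisimple, and then to a cyclic $l$-group, each of whose indecomposable modules has one-dimensional socle and head). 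Together with the observation that, when $V\cong C_l$, the class of $a$ in $H^1(H,W/P)$ is automatically non-zero for every hyperplane $P$ not containing $V$ (since $V$ injects into $W/P$ but acts trivially there), this disposes of all terms in which the relevant cohomology class vanishes and reduces the equation to a statement about the hyperplanes on which $a$ is ``visibly non-trivial''. For those I would use the structural hypotheses: in the case $G=(C_l\rtimes P_1)\times(C_l\rtimes P_2)$, every $p$-hypo-elementary subgroup of $Q=P_1\times P_2$ is a cyclic $q$-group and $W$ has only the $l+1$ index-$l$ subgroups --- the two direct factors, which are normal in $G$, and $l-1$ further ones permuted transitively by subgroups of $Q$ --- so the identity can be checked by explicit enumeration; in the irreducible case one instead exploits that $W$ is an irreducible faithful $\F_l[Q]$-module to control the $\bar H$-stable hyperplanes, the invariant subspaces, and which of the latter annihilate the class of $a$.

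The step I expect to be the main obstacle is this last one: showing that the contributions of the $p$-hypo-elementary $H$ that fail to be conjugate into the relevant point stabiliser --- those for which $[a]$, or one of its images in $H^1(H,W/P)$, is non-zero --- cancel against the $G-Q$ part of $\Theta$. The bookkeeping of index-$l$ subgroups, their $Q$-orbits and their $\bar H$-stabilisers is routine but lengthy; the cohomological cancellation is where the faithfulness and (ir)reducibility of the action of $Q$ on $W$ are genuinely used, and it is the technical heart of the argument.
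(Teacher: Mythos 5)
Your overall strategy coincides with the paper's: reduce via Conlon's induction theorem to showing $f_H(\Theta)=0$ for every $p$-hypo-elementary subgroup $H$ of $G$, compute each fixed-point count coset by coset, and use $\dim_{\F_l}W^{\bar H}=\dim_{\F_l}W_{\bar H}$ (the paper's Lemma \ref{lem:Soc_eq_head}) to settle the subgroups that are conjugate into $Q$. Up to that point your formulae are correct. But the case you defer as ``the main obstacle'' --- the subgroups $H$ that are \emph{not} conjugate into $Q$, i.e.\ those whose cocycle class $[a]$ is non-zero --- is precisely where the proof has its content, and you give no argument for it. After your reductions, what has to be shown for such an $H=K_{l'}\rtimes\langle\gamma\rangle$ with $\gamma=wh$, $w\in W$, $h\in Q$, is the single numerical identity
$$
(l-1)\cdot\#\cS_2-\#\cS_1=-1,
$$
where $\cS_1$ is the set of $H$-stable index-$l$ subgroups of $W$ not containing $w$ and $\cS_2$ is the set of $H$-stable index-$l$ subgroups containing $w$ on which $H$ acts trivially on the quotient. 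The paper proves this with two lemmas that you would need and do not have: Lemma \ref{lem:hyperplane}, which produces at least one element $U\in\cS_1$ (by putting $\gamma$ in Jordan normal form on an indecomposable summand of $W^{K_{l'}}$ containing $w$, and observing that if $w$ lay in the submodule spanned by the first $k-1$ basis vectors then an explicit element would conjugate $H$ into $Q$, contrary to hypothesis); and Lemma \ref{lem:lminus1to1}, which shows that $U''\mapsto\langle w\rangle(U\cap U'')$ is an $(l-1)$-to-$1$ surjection from $\cS_1\setminus\{U\}$ onto $\cS_2$ (because $W/(U\cap U'')$ splits as a sum of two trivial modules, hence contains exactly $l+1$ hyperplanes, of which one is $U$, exactly one contains $w$, and the remaining $l-1$ lie in $\cS_1$). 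Without these two counting facts your proposal is a correct setup, not a proof.

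A secondary misdirection: you plan to invoke the irreducibility of $W$, or the product structure $G=(C_l\rtimes P_1)\times(C_l\rtimes P_2)$, to finish this last case. The paper's proof of this theorem uses neither hypothesis --- the cancellation above holds for an arbitrary semidirect product $W\rtimes Q$ (compare Lemma \ref{lem:KinQ}, which is stated for any subgroup $Q$), and the structural hypotheses matter only for the surrounding classification, where they ensure that $\Prim_F(G)$ is non-trivial and that $\Theta$ generates it. Looking for the cancellation in the faithfulness and (ir)reducibility of the action would send you down the wrong path.
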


The proof of the theorem will require some preparation.

Recall from Definition \ref{def:fixedpoints} that if $X$ is a $G$-set, and
$U$ is a subgroup of $G$, then $f_U(X)$ denotes the number of fixed points
in $X$ under $U$, and that this extends linearly to a ring homomorphism
$f_U\colon \B(G)\to \Z$.

\begin{lemma}\label{lem:fixedpointcount}
Let $G$ be a finite group, and let $H$ and $K$ be subgroups. Then
$f_K(H)=\#\{g\in G/H : K\subseteq {}^gH\}=\#\{g\in G/H : {}^gK\subseteq H\}$.
\end{lemma}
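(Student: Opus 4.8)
The statement is entirely elementary: it unwinds the definition of the fixed-point count of the transitive $G$-set $G/H$ under the subgroup $K$. The plan is to work directly from Definition \ref{def:fixedpoints}. A coset $gH \in G/H$ is fixed by $K$ precisely when $kgH = gH$ for every $k \in K$, i.e. when $g^{-1}kg \in H$ for every $k \in K$, i.e. when $g^{-1}Kg \subseteq H$. Multiplying this inclusion on the left by $g$ and on the right by $g^{-1}$ shows it is equivalent to $K \subseteq gHg^{-1} = {}^gH$. This already gives the two claimed descriptions; the only thing to check for rigour is that the condition ``$g^{-1}Kg \subseteq H$'' genuinely depends only on the coset $gH$ and not on the chosen representative $g$, which is immediate since replacing $g$ by $gh$ with $h \in H$ conjugates $g^{-1}Kg$ by $h^{-1}$ inside $H$, leaving the inclusion in $H$ unaffected. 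Hence the set $\{g \in G/H : {}^gK \subseteq H\}$ is well defined, and by the above it is exactly the fixed-point set of $K$ on $G/H$, so its cardinality is $f_K(H)$ by definition.

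Concretely I would: (i) recall that $f_K(H)$ is by definition the number of $K$-fixed points of the $G$-set $G/H$; (ii) observe $gH$ is $K$-fixed iff ${}^gK \subseteq H$ iff $K \subseteq {}^gH$, giving a bijection between the $K$-fixed points and each of the two displayed sets; (iii) note the well-definedness remark above so that the sets make sense as subsets of $G/H$. That completes the proof.

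There is essentially no obstacle here — the statement is a bookkeeping identity, and the ``hard part'', such as it is, is merely being careful that the set $\{g \in G/H : {}^gK \subseteq H\}$ is interpreted as a set of cosets and that this interpretation is legitimate. The lemma will then be used repeatedly in the proof of Theorem \ref{thm:highdimrel} to translate statements about $f_U(\Theta) = 0$ for $p$-hypo-elementary $U$ (as required by Conlon's Induction Theorem, Theorem \ref{thm:Conlon}) into counting problems about conjugates of $U$ landing inside the various subgroups $Q$, $W\N_Q(U')$, $U'\N_Q(U')$ appearing in $\Theta$.
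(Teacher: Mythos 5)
Your proof is correct, and it is a more direct route than the one the paper takes. The paper first invokes Mackey's formula for $G$-sets, writing $\Res_K(G/H)=\bigsqcup_{g\in K\backslash G/H}K/({}^gH\cap K)$, reads off that the singleton orbits correspond to double cosets $KgH$ with $K\subseteq {}^gH$, and then has to supply an extra step: checking that $gH\mapsto KgH$ restricts to a bijection between $\{g\in G/H : K\subseteq {}^gH\}$ and $\{g\in K\backslash G/H : K\subseteq {}^gH\}$ (which holds because such a double coset is a single coset). You bypass the double-coset bookkeeping entirely by observing directly that $gH$ is $K$-fixed iff $kgH=gH$ for all $k\in K$ iff ${}^{g^{-1}}K\subseteq H$ iff $K\subseteq {}^gH$, together with the (correct and worth stating) remark that the condition depends only on the coset $gH$. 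The two arguments prove the same elementary fact; yours is shorter and self-contained, while the paper's phrasing makes the connection to the Mackey decomposition explicit, which is in the same spirit as the later fixed-point computations but is not logically necessary here. Either version is a complete proof.
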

\begin{proof}
By Mackey's formula for $G$-sets, we have
$$
\Res_K(G/H)=\bigsqcup_{g\in K\backslash G/H}K/({}^gH\cap K).
$$
By definition, $f_K(H)$ is the number of singleton orbits under the action of
$K$ on $G/H$, so $f_K(H)=\#\{g\in K\backslash G/H : K\subseteq {}^gH\}$.
An explicit calculation shows that the map $G/H\to K\backslash G/H$, $gH\mapsto KgH$
defines a bijection between $\{g\in G/H : K\subseteq {}^gH\}$ and
$\{g\in K\backslash G/H : K\subseteq {}^gH\}$, which proves the first equality.
The second equality is clear.
\end{proof}

\begin{lemma}\label{lem:Soc_eq_head}
Let $G$ be a finite group, let $l$ be a prime number, and let $\ck$
be a field of characteristic $l$. Suppose that there exists a normal subgroup
$N$ of $G$ such that $l\nmid\#N$ and $G/N$ is a cyclic $l$-group. Then for
every $\ck[G]$-module $M$, we have $\dim_{\ck} M^G = \dim_{\ck} M_G$. Moreover,
if $M$ is an indecomposable $\ck[G]$-module, then this dimension is $0$ or $1$.
\end{lemma}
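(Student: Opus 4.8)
The plan is to reduce to the case $N=\{1\}$, i.e.\ to $G$ being a cyclic $l$-group, by exploiting that $\ck[N]$ is semisimple (as $l\nmid\#N$, Maschke's theorem applies). I restrict $M$ to $N$ and decompose it into $\ck[N]$-isotypic components; let $M^N$ denote the trivial isotypic component and let $M'$ be the sum of the non-trivial ones, so that $M=M^N\oplus M'$ as $\ck[N]$-modules. The first step is to observe that this is in fact a decomposition of $\ck[G]$-modules: since $N$ is normal, $M^N$ is $G$-stable, and $G$ permutes the non-trivial $\ck[N]$-isotypic components amongst themselves, so $M'$ is $G$-stable as well; hence $M^N$ and $M'$ are complementary $\ck[G]$-submodules. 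The second step: every non-trivial simple $\ck[N]$-module $S$ satisfies $S_N=0$, because $I_NS$ (where $I_N$ is the augmentation ideal of $\ck[N]$) is a nonzero submodule of the simple module $S$ and therefore equals $S$; consequently $(M')_N=0$, so the composite $M^N\hookrightarrow M\twoheadrightarrow M_N$ is an isomorphism of $\ck[G/N]$-modules. Since $M^G=(M^N)^{G/N}$ and $M_G=(M_N)_{G/N}\cong(M^N)_{G/N}$, it suffices to prove the lemma when $G$ is the cyclic $l$-group $G/N$ acting on $V:=M^N$.

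For a cyclic $l$-group $H=\langle x\rangle$ of order $l^n$ and a $\ck[H]$-module $V$, I would use that $\ck[H]\cong\ck[x]/(x^{l^n}-1)=\ck[x]/\bigl((x-1)^{l^n}\bigr)$ is a uniserial (Nakayama) algebra, so $V$ is a direct sum of copies of the indecomposables $\ck[x]/\bigl((x-1)^j\bigr)$ for $1\le j\le l^n$. On each such summand, writing $t=x-1$, the invariants are $\ker t=t^{j-1}\cdot\ck[x]/(t^j)$ and the coinvariants are $\ck[x]/(t^j)\big/\bigl(t\cdot\ck[x]/(t^j)\bigr)$, each of dimension $1$. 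Summing over the indecomposable summands gives $\dim_\ck V^H=\dim_\ck V_H=(\text{number of summands})$, which proves the equality of dimensions in general, and shows the common value is $0$ or $1$ when $V$ is indecomposable.

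For the ``moreover'' part, suppose $M$ is an indecomposable $\ck[G]$-module. By the first paragraph $M=M^N\oplus M'$ as $\ck[G]$-modules, so $M'=0$ or $M^N=0$. In the second case $M^G=0$. In the first case $N$ acts trivially on $M$, so $M$ is an indecomposable $\ck[G/N]$-module and the previous paragraph yields $\dim_\ck M^G=1$. The only step that requires genuine care is verifying that $M=M^N\oplus M'$ is a decomposition of $\ck[G]$-modules rather than merely of $\ck[N]$-modules ---~this is precisely where normality of $N$ enters~--- after which everything reduces to the (completely standard) representation theory of cyclic $l$-groups in defining characteristic.
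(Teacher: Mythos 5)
Your proof is correct. The core reduction is the same as the paper's: your decomposition $M=M^N\oplus M'$ into the trivial and non-trivial $N$-isotypic components is exactly the paper's splitting $M=eM\oplus(1-e)M$ for the central idempotent $e=(1/\#N)\sum_{n\in N}n$, and both arguments then land on the representation theory of the cyclic $l$-group $G/N$ in characteristic $l$. The differences are in the finish: the paper first reduces to $M$ indecomposable and splits into the cases $M^G=0$ (where it argues $M^N=0$ because $G/N$ is an $l$-group, then $M_N=0$ by semisimplicity over $N$) and $M^G\neq 0$ (where indecomposability forces $M=eM$, and it invokes Janusz--Kupisch for the fact that an indecomposable module over $\ck[G/N]$ has simple socle and head). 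You instead prove the dimension equality for arbitrary $M$ in one pass, by identifying $M_N\cong M^N$ as $\ck[G/N]$-modules and then computing invariants and coinvariants of each Jordan block of $\ck[x]/\bigl((x-1)^{l^n}\bigr)$ explicitly. Your route is more self-contained --- the citation to Janusz and Kupisch (which concerns general groups with cyclic Sylow subgroups) is replaced by the elementary structure theory of modules over the local ring $\ck[t]/(t^{l^n})$, which is all that is needed here --- at the cost of writing out a computation the paper outsources. Both are complete proofs.
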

\begin{proof}
Let $M$ be a $\ck[G]$-module. We may, without loss of generality, assume that $M$
is indecomposable. The element $e=(1/\#N)\sum_{n\in N}n\in \ck[G]$ is a central idempotent,
and we have $M^N=eM$. If $M^G=0$, then it follows from the assumption that $G/N$ is
an $l$-group that $M^N=0$ also. Since $l \nmid \#N$, the $N$-module $M$ is
semisimple, so $M_N=0$ also, so a fortioti $M_G=0$, and we are done.

Suppose that $M^G\neq 0$, so $eM\ne 0$. Since $M=eM \oplus (1-e)M$, and $M$ is indecomposable,
it follows that $eM=M$, so that $M$ is an indecomposable $\ck[G/N]$-module. Since
$G/N$ is a cyclic $l$-group, it follows from \cite{Janusz,Kupisch} that the 
maximal semisimple submodule and the maximal semisimple quotient module of
$M$ are both simple. But the only simple $\ck[G/N]$-module is the trivial
one, which completes the proof.
\end{proof}


\begin{lemma}\label{lem:KinQ}
Let $l$ be a prime number, let $d\ge 1$ be an integer, let $G=W\rtimes Q$,
where $W=(C_l)^d$ and $Q$ is any subgroup of $G$. Let $\Theta$
be the element of $\B(G)$ given by
$$
\Theta = G-Q+\sum_{U\le_G W\atop (W:U)=l} (U\N_Q(U)-W\N_Q(U)),
$$
where the sum runs over a full set of $G$-conjugacy class representatives
of index $l$ subgroups of $W$. Then for every subgroup $K$ of $Q$, we have
$f_K(\Theta)=\#(W_K)-\#(W^K)$.
\end{lemma}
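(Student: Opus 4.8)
The plan is to evaluate $f_K(\Theta)$ by computing $f_K$ on each of the subgroups occurring as terms of $\Theta$, using Lemma~\ref{lem:fixedpointcount} throughout and working in the semidirect product $G=W\rtimes Q$, in which every element is written uniquely as $wq$ with $w\in W$ and $q\in Q$. The basic computational fact is that for $w\in W$ and $k\in Q$ one has $wkw^{-1}=((1-k)w)\,k$, with the first factor in $W$ and the second in $Q$, where $W$ is written additively with $Q$ acting on it; equivalently, $[w,k]=(1-k)w\in W$. In particular $wkw^{-1}\in Q$ if and only if $(1-k)w=0$. Two of the terms are trivial: $f_K(G)=1$ since $G/G$ is a point, and, since each coset $gQ$ contains a unique $w\in W$,
$$
f_K(Q)=\#\{w\in W: (1-k)w=0\text{ for all }k\in K\}=\#(W^K).
$$

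The substance is in the two remaining families of terms. The key structural observation is that $W$ is abelian and therefore normalises every one of its subgroups, so for an index-$l$ subgroup $U\le W$ the $G$-conjugacy class of $U$ coincides with its $Q$-conjugacy class; write $\mathcal{O}_U$ for this orbit, a set of index-$l$ subgroups of $W$, and recall that $W\cap Q=\{1\}$. An explicit computation of the conjugates $g\,(U\N_Q(U))\,g^{-1}$ and $g\,(W\N_Q(U))\,g^{-1}$ for $g=wq$, combined with Lemma~\ref{lem:fixedpointcount}, then yields
$$
f_K(W\N_Q(U))=\#\{V\in\mathcal{O}_U: K\subseteq\N_Q(V)\},\qquad f_K(U\N_Q(U))=\sum_{V\in\mathcal{O}_U,\ K\subseteq\N_Q(V)}\#\big((W/V)^K\big),
$$
so that the $U$-th bracket of $\Theta$ contributes $\sum_{V\in\mathcal{O}_U,\ K\subseteq\N_Q(V)}\big(\#((W/V)^K)-1\big)$ to $f_K(\Theta)$.

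Now $W/V\cong C_l$ has prime order, so $(W/V)^K$ is either trivial or equal to $W/V$, the latter occurring precisely when $[K,W]\subseteq V$ — in which case $V$ is automatically $K$-stable. Hence that contribution equals $(l-1)\cdot\#\{V\in\mathcal{O}_U:(W:V)=l,\ [K,W]\subseteq V\}$. Summing over a full set of $G$-conjugacy class representatives $U$, the orbits $\mathcal{O}_U$ partition the set of all index-$l$ subgroups of $W$, so the total contribution of the sum in $\Theta$ is $(l-1)$ times the number of index-$l$ subgroups of $W$ containing $[K,W]$; these correspond to the hyperplanes of $W_K=W/[K,W]\cong(\F_l)^{e}$, of which there are $(l^{e}-1)/(l-1)$. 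Thus the sum in $\Theta$ contributes $l^{e}-1=\#(W_K)-1$, and assembling the pieces gives
$$
f_K(\Theta)=1-\#(W^K)+\big(\#(W_K)-1\big)=\#(W_K)-\#(W^K),
$$
as required.

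The genuine obstacle should be the middle step: bookkeeping the cosets of $U\N_Q(U)$ and of $W\N_Q(U)$ in $G$ and the $K$-action on them. The asymmetry of $G=W\rtimes Q$ makes this fiddly — $Q$ need not normalise $U$, whereas $W$, being abelian, normalises everything inside it — so one must keep track simultaneously of a ``base'' (the orbit $\mathcal{O}_U$) and a ``fibre'' ($W/V$ over each $V\in\mathcal{O}_U$). Everything else is either immediate or elementary linear algebra over $\F_l$. We stress that this lemma is a statement purely about the Burnside ring: it uses no hypothesis on $Q$, on $l$, or on the characteristic $p$, which is why it can be fed, via Conlon's Induction Theorem (Theorem~\ref{thm:Conlon}), into the proof of Theorem~\ref{thm:highdimrel} in both of its cases.
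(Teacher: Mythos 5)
Your proposal is correct and follows essentially the same route as the paper: both evaluate $f_K$ term by term via Lemma~\ref{lem:fixedpointcount}, using the unique decomposition $g=wq$ in $W\rtimes Q$ and the transversal of $G/U\N_Q(U)$ fibred over the $Q$-orbit of $U$, and both reduce the sum over index-$l$ subgroups to counting hyperplanes of $W$ containing $[K,W]$, i.e.\ of $W_K$. Your packaging of the fibre count as $\#\bigl((W/V)^K\bigr)$ is a mild streamlining of the paper's explicit case split between $m=0$ and $m\neq 0$ in the transversal $\{tx^m\}$, but the underlying computation is identical and your asserted intermediate formulas are correct.
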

\begin{proof}
For $w\in W$, we have $K\le {}^wQ$ if
and only if $(w^{-1}kwk^{-1})k\in Q$ for all $k\in K$. Since the bracketed
term is in $W$, this is equivalent to $w^{-1}kwk^{-1}=1$ for all $k\in K$,
i.e. to $w\in W^K$. Since $W$ forms a transversal for $G/Q$, it follows from
Lemma \ref{lem:fixedpointcount} that
\begin{align}
f_K(G) & = 1,\label{eq:fixedpointsG}\\
f_K(Q) & = \#\{w\in W :K\le {}^wQ\}=\#(W^K).\label{eq:fixedpointsQ}
\end{align}
We now calculate the remaining terms in $f_K(\Theta)$. Let $U\le W$ be a
subgroup of index $l$. Let $T\subseteq Q$ be a transversal for $G/W\N_Q(U)$.
Let $x\in W\setminus U$. Then a transversal for $G/U\N_Q(U)$ is given by
$\{tx^m:t\in T, 0\le m\le l-1\}$. Applying Lemma \ref{lem:fixedpointcount}, and noting
that $K\le {}^tQ$ for all $t\in T$, we have
$$
f_K(W\N_Q(U))=\#\{t\in T : K\le {}^t\N_Q(U)\},
$$ and
\begin{eqnarray*}
\lefteqn{f_K(U\N_Q(U))=}\\
& & \#\{(t,m)\in T\times \{0,\ldots,l-1\} : K\le {}^{tx^m}(U\N_Q(U))\}.
\end{eqnarray*}
To count that last number, we note that for all $k\in K$, and for all $y=tx^m$ in
the above transversal,
we have ${}^{y^{-1}}k = (x^{-m}t^{-1}ktx^{m}t^{-1}k^{-1}t)(t^{-1}kt)$,
and of the two bracketed terms the first is in $W$, and is equal to
$[x^{-m},t^{-1}kt]$, while the second is in $Q$. It follows that we have
$K\le {}^y(U\N_Q(U))$ if and only if $[x^{-m},t^{-1}Kt]\le U$ and $t^{-1}Kt\le \N_Q(U)$.
If $m\neq 0$, then these conditions are equivalent to
$[\langle x\rangle,t^{-1}Kt]\le U$ and $t^{-1}Kt\le \N_Q(U)$, and in particular are
independent of $m$.
Partitioning the transversal $\{tx^m:t\in T, 0\le m\le l-1\}=T\sqcup
\{tx^m:t\in T, 1\le m\le l-1\}$, we find that
\begin{eqnarray*}
\lefteqn{f_K(U\N_Q(U)-W\N_Q(U))}\\
& & = (l-1)\cdot\#\{t\in T: t^{-1}Kt\le \N_Q(U), [\langle x\rangle,t^{-1}Kt]\le U\}\\
& & = (l-1)\cdot\#\{t\in T: K\le \N_Q({}^tU), K\text{ acts trivially on }W/{}^tU\}.
\end{eqnarray*}
As $t$ runs over $T$, ${}^tU$ runs once over the $G$-orbit of $U$,
since $T$ is a transversal for $G/W\N_Q(U)=G/\N_G(U)$. It follows that if we take the sum
of the above expression over a full set of representatives $U$ of $G$-conjugacy
classes of index $l$ subgroups of $W$, we obtain
\begin{eqnarray}
\lefteqn{\sum_{U\le_G W\atop (W:U)=l}f_K(U\N_Q(U)-W\N_Q(U))}\nonumber\\
& & = (l-1)\#\{\text{quotients of }W_K\text{ of order $l$}\} =\#(W_K)-1.\label{eq:sumU}
\end{eqnarray}
The result follows by combining equations (\ref{eq:fixedpointsG}),
(\ref{eq:fixedpointsQ}), and (\ref{eq:sumU}).
\end{proof}

\begin{lemma}\label{lem:hyperplane}
Let $G=W\rtimes Q$ be a soluble group, where $W=(C_l)^d$ for some prime number
$l\ne p$, so that $W$ is naturally an $\F_l[G]$-module, and let $K\le G$ be a
subgroup of the form $K=K_{l'}\rtimes \langle\gamma\rangle$, where $K_{l'}$ is
contained in $Q$ and is of order coprime to $l$, and $\gamma=wh$ is of order a
power of $l$, with $w\in W$ and $h\in Q$. Suppose that $K$ is not $G$-conjugate
to any subgroup of $Q$. Then there exists an $\F_l[K]$-submodule $U$ of $W$ of
index $l$ not containing $w$. Moreover, for any such
$U\le W$, the group $K$ acts trivially on $W/U$.
\end{lemma}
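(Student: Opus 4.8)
The plan is to locate $w$ precisely inside $W$, regarded as an $\F_l[K]$-module via conjugation, and then to produce the hyperplane by a coinvariants argument; the ``moreover'' clause will come out of a short Clifford-theoretic observation. Note first that, $W$ being abelian, $\gamma=wh$ acts on $W$ exactly as $h$ does. \emph{Step 1: $w\in W^{K_{l'}}$.} This uses only that $\gamma$ normalises $K_{l'}$. For $k\in K_{l'}$, the commutator identity in $W\rtimes Q$ gives $w^{-1}kw=\big(({}^k-1)w\big)k$, so the element $\gamma^{-1}k\gamma=h^{-1}(w^{-1}kw)h$ has $W$-component equal to the image of $({}^k-1)w$ under an automorphism of $W$; since $\gamma^{-1}k\gamma\in K_{l'}\subseteq Q$, that component must vanish, forcing ${}^kw=w$.

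\emph{Step 2: splitting.} As $l\nmid\#K_{l'}$, Maschke's theorem gives a $K_{l'}$-stable decomposition $W=W^{K_{l'}}\oplus W''$, with $W''$ the sum of the non-trivial $K_{l'}$-isotypic components. Since $\gamma$ normalises $K_{l'}$, conjugation by $\gamma$ permutes these isotypic components and fixes the trivial one, so $W^{K_{l'}}$ and $W''$ are both $\F_l[K]$-submodules; by Step 1, $w\in W^{K_{l'}}$, and on $W^{K_{l'}}$ the $K$-action factors through the cyclic $l$-group $\langle h\rangle$. \emph{Step 3: the hyperplane.} It now suffices to find a hyperplane $U_0$ of $W^{K_{l'}}$ containing $(h-1)W^{K_{l'}}$ but not $w$: then $U:=U_0\oplus W''$ is an $\F_l[K]$-submodule of $W$ of index $l$ with $w\notin U$, and $K$ acts trivially on $W/U\cong W^{K_{l'}}/U_0$. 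Such a $U_0$ exists precisely when $w$ has non-zero image in the coinvariants $W^{K_{l'}}/(h-1)W^{K_{l'}}$; if not, then $w=(h-1)v$ for some $v\in W^{K_{l'}}$, and a direct computation in $W\rtimes Q$ gives ${}^vk=k$ for all $k\in K_{l'}$ and ${}^v\gamma=\big(w+(1-h)v\big)h=h$, so that ${}^vK=K_{l'}\langle h\rangle\subseteq Q$, contradicting the hypothesis that $K$ is not $G$-conjugate to a subgroup of $Q$.

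For the ``moreover'', let $U\le W$ be any $\F_l[K]$-submodule of index $l$ with $w\notin U$. The element $\gamma$ has $l$-power order and acts on the one-dimensional space $W/U$ through $\Aut(W/U)\cong\F_l^{\times}$, which has order prime to $l$, so $\gamma$ acts trivially. Being one-dimensional, $W/U$ is a simple $\F_l[K_{l'}]$-module affording some character $\chi_0$, and the composite of the inclusion $W^{K_{l'}}\subseteq W$ with the projection $W\to W/U$ is a $K_{l'}$-homomorphism from a trivial-isotypic module to a $\chi_0$-isotypic one; if $\chi_0$ were non-trivial it would vanish, forcing $w\in W^{K_{l'}}\subseteq U$, a contradiction. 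Hence $\chi_0$ is trivial and $K=K_{l'}\langle\gamma\rangle$ acts trivially on $W/U$.

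The step I expect to be the main obstacle is Step 3 — precisely, recognising that for $K$ of this shape the condition ``$K$ is not $G$-conjugate into $Q$'' is equivalent to the concrete linear-algebra condition $w\notin(h-1)W^{K_{l'}}$, detected by conjugating by elements of $W$. Steps 1, 2 and the ``moreover'' are essentially bookkeeping with commutator identities in $W\rtimes Q$, Maschke's theorem, and Clifford theory.
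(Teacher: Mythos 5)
Your proof is correct and follows essentially the same route as the paper's: the identical commutator computation showing $w\in W^{K_{l'}}$, the same explicit conjugating element of $W$ producing the contradiction ${}^vK\le Q$, and the same order/fixed-vector argument for the ``moreover'' clause. The only real difference is in the middle step, where the paper picks an indecomposable $\F_l[\langle\gamma\rangle]$-summand of $W^{K_{l'}}$ containing $w$ and a Jordan basis, whereas you phrase the same obstruction as $w\notin(h-1)W^{K_{l'}}$ via coinvariants --- a slightly cleaner packaging of the same idea.
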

\begin{proof}
First, we claim that $w\in W^{K_{l'}}$. Let $k\in K_{l'}$ be arbitrary. Since
$K_{l'}$ is normal in $K$, we have $whkh^{-1}w^{-1}\in K_{l'}\subset Q$.
But also, since $whkh^{-1}w^{-1} = [w, hkh^{-1}]hkh^{-1}$, and $hkh^{-1}\in Q$,
it follows that $[w, hkh^{-1}]\in Q$. On the other hand, since $w\in W$, and $W$
is normal in $G$, we also have $[w, hkh^{-1}]\in W$, hence $[w, hkh^{-1}]=1$, or equivalently
$kh^{-1}w^{-1}hk^{-1} =h^{-1}w^{-1}h$. We deduce that $h^{-1}wh\in W^{K_{l'}}$.
But since $K_{l'}$ is normal in $K$, the subgroup $W^{K_{l'}}$ is a $K$-submodule of $W$,
so that $w={}^\gamma(h^{-1}wh)\in W^{K_{l'}}$ also, as claimed.



Let $W^{K_{l'}}=N\oplus N'$ as a $K$-module, where $N$ is an indecomposable $K$-module
containing $w$. Since $K_{l'}$ acts trivially on $N$, we may view it as an
$\mathbb{F}_l[\langle \gamma \rangle ]$-module. Let $e_1$, \ldots, $e_k$ be an
$\F_l$-basis of $N$ with respect to which $\gamma$ acts in Jordan normal form. Then we
claim that $w$ is not contained in the proper $K$-submodule $L$ generated by
$e_1$, \ldots, $e_{k-1}$. Indeed, if it were, say
$w=e_1^{\alpha_1}\cdots e_{k-1}^{\alpha_{k-1}}$ for $\alpha_1$, \ldots, $\alpha_{k-1}\in \Z$,
then the element $e_2^{-\alpha_1}\cdots e_k^{-\alpha_{k-1}}$ would conjugate $wh$
to $h$ and would commute with $K_{l'}$, thus conjugating $K$ to a subgroup of $Q$,
which contradicts the hypotheses on $K$. Thus, the submodule $U=L\oplus N'$ satisfies
the conclusions of the lemma.

Finally, for any $U$ satisfying those conclusions,
$K_{l'}$ acts trivially on $W/U$, since it centralises $w\not\in U$. Moreover, $K/K_{l'}$ is
an $l$-group, so also acts trivially on $W/U$, since that quotient has order $l$.
\end{proof}

\begin{lemma}\label{lem:lminus1to1}
Let $G=W\rtimes Q$, $K=K_{l'}\rtimes\langle \gamma\rangle$, $w\in W$, and
$U\leq W$ be as in Lemma \ref{lem:hyperplane}. Let $\cS_1$ be the set of subgroups
of $W$ of index $l$ that are normalised by $K$, do not contain $w$, and are
different from $U$, and let
$\cS_2$ be the set of subgroups $V$ of $W$ of index $l$ that are normalised by
$K$, contain $w$, and such that $K$ acts trivially on $W/V$ by conjugation.
Then $\#\cS_1=(l-1)\cdot\#\cS_2$.
\end{lemma}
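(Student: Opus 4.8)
The plan is to translate the statement into linear algebra over $\F_l$, using that the index-$l$ subgroups of $W=(C_l)^d$ are precisely the hyperplanes of the $\F_l$-vector space $W$. The crucial observation—and the step I expect to be the main obstacle—is that \emph{any} index-$l$ subgroup $V\le W$ that is normalised by $K$ and does not contain $w$ automatically has $K$ acting trivially on $W/V$, equivalently satisfies $[K,W]\le V$. To prove this, I would first recall from the proof of Lemma~\ref{lem:hyperplane} that $w\in W^{K_{l'}}$. Since $w\notin V$ and $(W:V)=l$, we then have $W=V+W^{K_{l'}}$, so the natural map $W^{K_{l'}}\to W/V$ is a surjection of $\F_l[K]$-modules (note $W^{K_{l'}}$ is $K$-stable because $K_{l'}$ is normal in $K$); as $K_{l'}$ acts trivially on its source it acts trivially on $W/V$. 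On the other hand, $\gamma$ acts on the one-dimensional space $W/V$ through $\Aut(W/V)\cong\F_l^{\times}$, a group of order $l-1$ coprime to $l$, so its image, being of $l$-power order, is trivial. Hence $K=\langle K_{l'},\gamma\rangle$ acts trivially on $W/V$, i.e.\ $[K,W]\le V$.

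Granting this, I would compute the two sides as follows. By Lemma~\ref{lem:hyperplane} the subgroup $U$ is itself an index-$l$ subgroup with $w\notin U$ on which $K$ acts trivially, so $[K,W]\le U$; since $w\notin U$, the image $\overline w$ of $w$ in the coinvariants $\overline W:=W/[K,W]=W_K$ is non-zero, and I set $e:=\dim_{\F_l}\overline W\ge 1$. Using the observation above together with Lemma~\ref{lem:hyperplane}, the set $\cS_1\sqcup\{U\}$ is exactly the set of index-$l$ subgroups $V\le W$ with $[K,W]\le V$ and $w\notin V$, while $\cS_2$ is the set of index-$l$ subgroups $V\le W$ with $[K,W]\le V$ and $w\in V$ (the ``normalised by $K$'' clause in the definition of $\cS_2$ being implied by ``$K$ acts trivially on $W/V$''). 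The subgroup correspondence $V\mapsto V/[K,W]$ then identifies $\cS_1\sqcup\{U\}$ with the set of hyperplanes of $\overline W$ not containing $\overline w$, and $\cS_2$ with the set of hyperplanes of $\overline W$ containing $\overline w$.

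Finally, I would count hyperplanes: dualising, the hyperplanes of $\overline W$ correspond to lines in $\overline W^{*}$, of which there are $(l^e-1)/(l-1)$, and those containing $\overline w$ correspond to lines in the $(e-1)$-dimensional subspace $\overline w^{\perp}\subseteq\overline W^{*}$, of which there are $(l^{e-1}-1)/(l-1)$. Therefore $\#\cS_2=(l^{e-1}-1)/(l-1)$ and
\[
\#\cS_1=\frac{l^e-1}{l-1}-\frac{l^{e-1}-1}{l-1}-1=l^{e-1}-1=(l-1)\cdot\#\cS_2,
\]
which is the desired equality. All the content is in the first paragraph; the remainder is the subgroup correspondence theorem and elementary counting of hyperplanes through a fixed non-zero vector.
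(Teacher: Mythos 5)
Your proof is correct, and it reaches the conclusion by a mildly different route from the paper's. Both arguments rest on the same pivotal fact, namely that every index-$l$ subgroup of $W$ normalised by $K$ and not containing $w$ automatically has trivial $K$-action on the quotient; you re-derive this carefully (your justification via the surjection $W^{K_{l'}}\to W/V$ is in fact slightly more complete than the paper's one-line version), whereas the paper simply invokes the ``moreover'' clause of Lemma \ref{lem:hyperplane} for the elements of $\cS_1$. Where you diverge is in the counting: the paper argues locally, fixing $U'\in\cS_1\cup\cS_2$, showing that $W/(U\cap U')$ is a two-dimensional trivial $K$-module, and exhibiting the explicit $(l-1)$-to-$1$ map $\cS_1\to\cS_2$, $U''\mapsto \langle w\rangle(U\cap U'')$; you instead pass globally to the coinvariants $W_K=W/[K,W]$ and count hyperplanes through and not through the nonzero image of $w$. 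Your version buys the explicit cardinalities $\#\cS_1=l^{e-1}-1$ and $\#\cS_2=(l^{e-1}-1)/(l-1)$ with $e=\dim_{\F_l}W_K$, and dispenses with the need to check that the paper's map is well defined and surjective; the paper's version avoids introducing the coinvariant space and produces a concrete correspondence realising the ratio. Both are complete proofs.
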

\begin{proof}
Suppose that either of $\cS_1$, $\cS_2$ is non-empty, let $U'\in \cS_1\cup \cS_2$.
Then $U\cap U'$ is a $K$-submodule of $W$ of index $l^2$, and the $\F_l[K]$-module
$W/(U\cap U')$ has at least two distinct quotients of order $l$ with trivial
$K$-action, namely $W/U$ and $W/U'$. It follows that the $\F_l[K]$-module
$W/(U\cap U')$ splits completely as a direct sum of two trivial $\F_l[K]$-modules.
Thus, there exist exactly $l+1$ index $l$ submodules of $W$ containing $U\cap U'$,
one of them equal to $U$, exactly one of them containing $w$, and thus in $\cS_2$,
and $l-1$ distinct elements of $\cS_1$. This proves that the map
$\cS_1\to \cS_2$, $U''\mapsto \langle w\rangle (U\cap U'')$ is $(l-1)$ to $1$,
and hence the lemma.
\end{proof}

\begin{lemma}\label{lem:KinUNU}
Let $G=W\rtimes Q$ be as in Lemma \ref{lem:hyperplane}. Let $K=K_{l'}\rtimes \langle \gamma\rangle$
be a subgroup of $G$, where $K_{l'}$ is contained in $Q$ and has order coprime to $l$, and $\gamma$
has order a power of $l$. Let $U\leq W$ be a subgroup of index $l$,
let $t\in Q$, let $x\in W\setminus U$, and let $m\in \{1,\ldots,l-1\}$.
Then the following are equivalent:
\begin{enumerate}[leftmargin=*, label={\upshape(\roman*)}]
\item\label{item:allm} for all $n\in \{1,\ldots,l-1\}$ we have $K \le {}^{tx^n}(U\N_Q(U))$;
\item\label{item:existsm} we have $K \le {}^{tx^m}(U\N_Q(U))$;
\item\label{item:3} we have $K\le {}^t\N_G(U)$, $[{}^t\langle x \rangle , K]\le {}^tU$, and $w\in {}^tU$.
\end{enumerate}
\end{lemma}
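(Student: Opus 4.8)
The plan is to reduce all three conditions to a single statement about the action of $K$ on $W/U$ that visibly does not involve $m$ (or $n$). First I would eliminate the parameter $t$: since $t\in Q$ one has $tx^{n}=({}^{t}x)^{n}\,t$, hence
\[
{}^{tx^{n}}\!\bigl(U\N_Q(U)\bigr)={}^{({}^{t}x)^{n}}\!\bigl(({}^{t}U)\,\N_Q({}^{t}U)\bigr);
\]
as ${}^{t}\N_G(U)=\N_G({}^{t}U)$ and ${}^{t}\langle x\rangle=\langle{}^{t}x\rangle$, conditions \ref{item:allm}--\ref{item:3} for $(t,U,x)$ are literally conditions \ref{item:allm}--\ref{item:3} for $(1,{}^{t}U,{}^{t}x)$. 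So I may and do assume $t=1$, and I write $\gamma=wh$ with $w\in W$ and $h\in Q$, so that $K$ is generated by $K_{l'}\subseteq Q$ together with $\gamma$.

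Next I would redo the conjugation computation from the proof of Lemma \ref{lem:KinQ} for an arbitrary element $k=w_{k}h_{k}$ of $G$ (with $w_{k}\in W$, $h_{k}\in Q$): using that $W$ is abelian and normal,
\[
{}^{x^{-n}}k=\bigl(w_{k}\,[x^{-n},h_{k}]\bigr)\,h_{k},\qquad w_{k}[x^{-n},h_{k}]\in W,\ \ h_{k}\in Q.
\]
Since $U\N_Q(U)=U\rtimes\N_Q(U)$ meets $W$ in $U$ and has image $\N_Q(U)$ in $Q=G/W$, it follows that $K\le{}^{x^{n}}(U\N_Q(U))$ if and only if $h_{k}\in\N_Q(U)$ and $w_{k}[x^{-n},h_{k}]\in U$ for every $k\in K$. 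Imposed for all $k$, the first condition says the image of $K$ in $Q$ lies in $\N_Q(U)$, i.e.\ $K\le\N_G(U)=W\rtimes\N_Q(U)$ --- which is also the first clause of \ref{item:3}.

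Now assume $K\le\N_G(U)$ and pass to $\overline{W}=W/U$, a one-dimensional $\F_l$-space on which $K$ acts through a character $\lambda\colon K\to\F_l^{\times}$. Writing $\overline{v}$ for the image of $v\in W$, the commutator $[x^{-n},h_{k}]=x^{-n}\,{}^{h_{k}}(x^{n})$ has image $n(\lambda(k)-1)\overline{x}$, so the remaining requirement becomes $\nu_{n}(k):=\overline{w_{k}}+n(\lambda(k)-1)\overline{x}=0$ for all $k\in K$. From $w_{kk'}=w_{k}\cdot{}^{h_{k}}w_{k'}$ one checks the cocycle identity $\nu_{n}(kk')=\nu_{n}(k)+\lambda(k)\nu_{n}(k')$, so $\{k:\nu_{n}(k)=0\}$ is a subgroup of $K$ and it suffices to test vanishing on the generators $K_{l'}$ and $\gamma$. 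For $k\in K_{l'}\subseteq Q$ we have $w_{k}=1$, so $\nu_{n}(k)=n(\lambda(k)-1)\overline{x}$, which vanishes if and only if $\lambda(k)=1$ (here $\overline{x}$ spans $\overline{W}$ and $1\le n\le l-1$ is invertible modulo $l$); this is independent of $n$. For $k=\gamma$, the order of $\gamma$ is a power of $l$ while $\#\F_l^{\times}=l-1$, so $\lambda(\gamma)=1$ and $\nu_{n}(\gamma)=\overline{w}$, which vanishes if and only if $w\in U$. Hence, for every $n\in\{1,\dots,l-1\}$,
\[
K\le{}^{x^{n}}(U\N_Q(U))\iff K\le\N_G(U),\ \lambda|_{K_{l'}}\equiv1,\ w\in U,
\]
and the right-hand side does not involve $n$. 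Finally, because $\lambda(\gamma)=1$ automatically and $K_{l'},\gamma$ generate $K$, the clause $\lambda|_{K_{l'}}\equiv1$ is equivalent to $\lambda\equiv1$, i.e.\ (given $K\le\N_G(U)$, and since $\overline{x}$ spans $W/U$) to $[\langle x\rangle,K]\le U$; so the right-hand side is exactly \ref{item:3}. This gives \ref{item:existsm}$\Leftrightarrow$\ref{item:3} for the given $m$; applying the displayed equivalence for every $n$ gives \ref{item:allm}$\Leftrightarrow$\ref{item:3}; and \ref{item:allm}$\Rightarrow$\ref{item:existsm} is trivial. So \ref{item:allm}, \ref{item:existsm} and \ref{item:3} are equivalent.

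The work is all bookkeeping: pinning down the conjugation identity and the reduction $t\mapsto1$, and noticing that $\{k:\nu_{n}(k)=0\}$ is a subgroup so that the condition can be checked on generators. The conceptual reason $m$ and $n$ disappear is simply that, lying in $\{1,\dots,l-1\}$, they are invertible modulo $l$, so ``$n(\lambda(k)-1)\overline{x}=0$'' is equivalent to ``$\lambda(k)=1$'' independently of $n$.
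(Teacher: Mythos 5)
Your proof is correct and follows essentially the same route as the paper's: decompose each ${}^{(tx^n)^{-1}}k$ into its $W$- and $Q$-components using that $W$ is abelian and normal, reduce the containment to a condition checkable on the generators $K_{l'}$ and $\gamma$, and observe that the $m$-dependence disappears because $m$ is invertible modulo $l$ while $\gamma$, having $l$-power order, automatically acts trivially on $W/U$ so that its condition collapses to $w\in{}^tU$. Your normalisation to $t=1$ and the repackaging of the membership condition as the vanishing of the cocycle $\nu_n$ (whose vanishing locus is just $K\cap{}^{x^n}(U\N_Q(U))$, a subgroup for free) are cosmetic tidyings of the same computation.
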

\begin{proof}
We will first show that \ref{item:existsm} is equivalent to \ref{item:3}.
We clearly have $K \le {}^{tx^m}(U\N_Q(U))$ if any only if
\begin{enumerate}[leftmargin=*, label={\upshape(\alph*)}]
\item $K_{l'} \le {}^{tx^m}(U\N_Q(U))$ and
\item $\gamma \in {}^{tx^m}(U\N_Q(U))$.
\end{enumerate}
First, we discuss (a). Let $k\in K_{l'}$.
Then
$$
x^{-m}t^{-1}ktx^m=x^{-m}(t^{-1}kt)x^{m}(t^{-1}kt)^{-1}(t^{-1}kt),
$$ where the last bracketed
term is in $Q$, and the expression preceding it is in $W$ and equals $[x^{-m},t^{-1}kt]$.
It follows that (a) is equivalent to $[{}^t(x^{-m}),K_{l'}]\le {}^tU$
and $K_{l'}\le {}^t\N_Q(U)$. Moreover, since $(t^{-1}kt)x^{m}(t^{-1}kt)^{-1}\in W$, and $W$
is abelian, we have $x^{-m}(t^{-1}kt)x^{m}(t^{-1}kt)^{-1}=(x(t^{-1}kt)x^{-1}(t^{-1}kt)^{-1})^{-m}$,
so that $[{}^t(x^m),K_{l'}]\le {}^tU$ if and only if $[{}^t\langle x\rangle,K_{l'}]\le {}^tU$.
In summary, (a) is equivalent to $[{}^t\langle x\rangle,K_{l'}]\le {}^tU$ and
$K_{l'}\le {}^t\N_Q(U)$.

We analyse condition (b) similarly. Write $\gamma=wh$, where $w\in W$ and $h\in Q$.
Then by the same calculation as before, (b) is equivalent to
$[{}^t\langle x\rangle,\gamma]w\le {}^tU$ and $h\in {}^t\N_Q(U)$. But if
$h\in {}^t\N_Q(U)=\N_Q({}^tU)$, then $\gamma$ normalises ${}^tU$ and, having order a power of
$l$, acts trivially on the quotient $W/{}^tU$, so that in this case
$\gamma ({}^tx)^{-1}\gamma^{-1}=({}^tx)^{-1}u$ for some $u\in {}^tU$. We then have
$[{}^tx,\gamma]w = uw$, and the condition that this is in ${}^tU$ is equivalent
to $w\in {}^tU$, so condition (b) is equivalent to
$[{}^t\langle x\rangle,\gamma]\le {}^tU$ and $w\in {}^tU$.
This proves the equivalence between \ref{item:existsm} and \ref{item:3}.

Since the condition \ref{item:3} does not depend on $m$, this also proves
the equivalence between \ref{item:allm} and \ref{item:existsm}.
\end{proof}

We are now ready to prove Theorem \ref{thm:highdimrel}.
\begin{proof}[Proof of Theorem \ref{thm:highdimrel}]
By Theorem \ref{thm:Conlon}, the statement of the theorem is equivalent to the claim
that for all $p$-hypo-elementary subgroups $K$ of $G$, we have $f_K(\Theta)=0$.

If $K$ is a $p$-hypo-elementary subgroup of $G$, then either
$K$ is $G$-conjugate to a subgroup of $Q$; or Hall's Theorem implies that
the $(-l)$-Hall subgroup of $K$, which is necessarily normal in $K$, is conjugate
to a subgroup of $Q$, so that, possibly after replacing with a conjugate subgroup,
$K$ is as in Lemmas \ref{lem:hyperplane} and \ref{lem:lminus1to1}. 

If $K$ is a $p$-hypo-elementary subgroup that is conjugate to a subgroup of $Q$,
then by Lemma \ref{lem:KinQ}, we
have $f_K(\theta)=\#W_K-\#W^K$, which is equal to $0$ by Lemma \ref{lem:Soc_eq_head}.

Suppose that $K=K_{l'}\rtimes\langle \gamma\rangle$, where $K_{l'}$ is
of order comprime to $l$ and is contained in $Q$, and $\gamma$ has order a power
of $l$, and assume that $K$ is not conjugate to a subgroup of $Q$. Then we have
$f_K(G)=1$ and $f_K(Q)=0$. Write
$\gamma=wh$, where $w\in W$ and $h\in Q$. Let $U\le W$ have index $l$, let
$T\subseteq Q$ be a transversal for $G/W\N_Q(U)$, and let $x\in W\setminus U$,
so that a transversal for $G/U\N_Q(U)$ is given by $\{tx^m:t\in T, 0\leq m\leq l-1\}$.
Then by Lemma \ref{lem:fixedpointcount}, we have
\begin{align*}
	f_K(W\N_Q(U))&=\#\{t\in T : K\le {}^t(W\N_Q(U))\},
	\\
	f_K(U\N_Q(U))&=\#\{t\in T : K\le {}^t(U\N_Q(U))\} +\\
	&\#\{(t,m)\in T\times \{1,\ldots,l-1\}: K\le {}^{tx^m}(U\N_Q(U))\}.
	\end{align*}
For $t\in T$, the condition that $K\subseteq {}^t(W\N_Q(U))$ and
$K\nsubseteq {}^t(U\N_Q(U))$ is equivalent to $K\le {}^t\N_G(U)$ and
$w\notin {}^tU$. Combining these observations with Lemma \ref{lem:KinUNU}, we have
\begin{eqnarray*}
\lefteqn{f_K(U\N_Q(U)-W\N_Q(U))}\\
 & = &\#\{(t,m)\in T\times \{1,\ldots,l-1\}: K\le {}^{tx^m}(U\N_Q(U))\}-\\
 & &\#\{t\in T : K\le {}^t\N_G(U), w\notin {}^tU\}) \\
& = &(l-1)\#\{t\in T: K\le {}^t\N_G(U), [{}^t\langle x \rangle , K]\le {}^tU, w\in {}^tU\}-\\
& & \#\{t\in T : K\le {}^t\N_G(U), w\notin {}^tU\}).
\end{eqnarray*}
For $K\le {}^t\N_G(U)=\N_G({}^tU)$, the condition $[{}^t\langle x \rangle , K]\le {}^tU$ is equivalent
to the condition that $K$ acts trivially on the quotient $W/{}^tU$. Since $T$
is a transversal for $G/\N_G(U)$,
it follows that as $t$ runs over $T$, ${}^tU$ runs exactly once over the
$G$-orbit of $U$. Hence, summing over
representatives of $G$-orbits of hyperplanes of $W$, we deduce
\begin{eqnarray*}
\lefteqn{f_K(\Theta) = 1 + }\\
& & (l-1)\#\{U\le W: (W:U) = l, K\le \N_G(U), w\in U, (W/U)^K=W/U\}-\\
& & \#\{U\le W: (W:U)=l, K\le\N_G(U), w\not\in U\}.
\end{eqnarray*}
By Lemma \ref{lem:lminus1to1}, this is equal to $0$.
\end{proof}

\begin{proof}[Proof of Theorem \ref{thm:main}]
Part \ref{item:structG} follows from Corollary \ref{cor:SES} if $G$ is not
soluble, and from Theorem \ref{thm:onlyif} if $G$ is soluble.
Part \ref{item:generatorsPrim}\ref{item:main_qquasielem} follows by combining
Theorems \ref{thm:Artin} and \ref{thm:Conlon}. Suppose that $G$ is as in part
\ref{item:structG}\ref{item:main_Serre}. If either $d>1$ or $D$ is not of prime power order,
then $G$ is not a $(p,q')$-Dress group for any prime number $q'$, while it is
easy to see that all its proper quotients are $(p,q)$-Dress groups, so
by Theorem \ref{thm:quotients} $\Prim_F(G)$ has the claimed structure,
and is generated by any relation in which $G$ has coeffiecient $1$. Thus,
part \ref{item:generatorsPrim}\ref{item:main_Serre}\ref{item:mn} follows from Theorem
\ref{prop:1dimrel}(i), while part \ref{item:generatorsPrim}\ref{item:main_Serre}\ref{item:d>1}
follows from Theorem \ref{thm:highdimrel}. The quasi-elementary case, part
\ref{item:generatorsPrim}\ref{item:main_Serre}\ref{item:q^k} follows from Proposition
\ref{prop:1dimrel}(ii). Finally, part \ref{item:generatorsPrim}\ref{item:main_nonabelianSerre}
follows from Corollary \ref{cor:SES}.
\end{proof}


\begin{thebibliography}{10}
\bibitem{Bar-11}
A. Bartel, On Brauer-Kuroda type relations of S-class
numbers in dihedral extensions, J. Reine Angew. Math. 668 (2012), 211--244.

\bibitem{bra1}
A. Bartel and T. Dokchitser, 
Brauer relations in finite groups,
J. Eur. Math. Soc. (JEMS) 17 (2015), 2473--2512.


\bibitem{BP}
A. Bartel and A. Page,
Torsion homology and regulators of isospectral manifolds, J. Topology
9 (2016), 1237--1256.

\bibitem{GFIs}
A. Bartel and M. Spencer,
A note on Green functors with inflation, J. Algebra
483 (2017), 230--244.

\bibitem{Benson}
D. J. Benson, Representations and Cohomology I, Cambridge University Press, 1991.

\bibitem{BB-04}
W. Bley and R. Boltje, Cohomological Mackey functors in number theory,
J. Number Theory 105 (2004), 1--37.

\bibitem{Boltje}
R. Boltje, A general theory of canonical induction formulae,
J. Algebra 206 (1998), 293--343.

\bibitem{BK}
R. Boltje and B. K\"ulshammer, Explicit and Canonical Dress Induction,
Algebr. Represent. Theory 8 (2005), 731--746.

\bibitem{Bra}
R. Brauer, Beziehungen zwischen Klassenzahlen von Teilk\"orpern
eines galoisschen K\"orpers, Math. Nachr. 4 (1951), 158--174.

\bibitem{Bouc}
S. Bouc,
The Dade group of a $p$-group, Invent. Math. 164 no. 1 (2006), 189--231.


\bibitem{CR2}
C. W. Curtis and I. Reiner, Methods of Representation Theory, Volume II,
John Wiley and Sons, New York, 1994.

\bibitem{Del-73}
P. Deligne.
Les constantes des \'equations fonctionnelles des fonctions $L$,
In Modular functions of one variable, II (Proc. Internat.
  Summer School, Univ. Antwerp, {A}ntwerp, 1972),
  Lecture Notes in Math. 349., Springer Verlag, Berlin, 1973, 501--597.

\bibitem{tamroot}
T. Dokchitser and V. Dokchitser, Regulator constants and the parity
conjecture, Invent. Math. 178 (2009), 23--71.

\bibitem{squarity}
T. Dokchitser and V. Dokchitser, On the Birch--Swinnerton-Dyer quotients
modulo squares, Annals of Math. 172 (2010), 567--596.





\bibitem{GSdS}
R. Gornet, B. de Smit, and C. J. Sutton,
Isospectral surfaces with distinct covering spectra via Cayley Graphs,
Geom. Dedicata 158 (2012), 343--352.

\bibitem{Isaacs}
I. M. Isaacs, Finite Group Theory,
Graduate Studies in Mathematics 92, American Mathematical Society, Providence, 2008.

\bibitem{Janusz}
G. J. Janusz,
Indecomposable modules for finite groups,
Ann. of Math. 89 (1969), 209--241.

\bibitem{Kupisch}
H. Kupisch,
Projektive Moduln endlicher Gruppen mit zyklischer $p$-Sylow Gruppe,
J. Algebra 10 (1968), 1--7.

\bibitem{Kur}
S. Kuroda, \"Uber die Klassenzahlen algebraischer Zahlk\"orper,
Nagoya Math. J. 1 (1950), 1--10.

\bibitem{Lan-70}
R. P. Langlands,
On the functional equation of the Artin $l$-functions,
Yale University preprint, 1970.


\bibitem{Smi-01}
B. de Smit, Brauer-Kuroda relations for $S$-class numbers,
Acta Arith. 98 (2001), 133--146.

\bibitem{SmiEdix}
B. de Smit and B. Edixhoven,
Sur un r\'esultat d'Imin Chen, Math. Res. Lett. 7 (2000), 147--153.

\bibitem{Sna-88}
V. Snaith,
Explicit Brauer induction, Invent. Math. 94 (1988), 455--478.

\bibitem{Sunada}
T. Sunada, Riemannian coverings and isospectral manifolds,
Annals Math. \textbf{121} (1985), 169--186.

\bibitem{Tor-84}
J. Tornehave,
Relations among permutation representations of $p$-groups,
preprint, 1984.

\bibitem{Wal}
C. D. Walter, Kuroda's class number relation, Acta Arith. 35 (1979),
41--51.
\end{thebibliography}
\end{document}